\documentclass[11pt]{article}
\usepackage{amsmath,amssymb,amsthm}
\usepackage[margin=1in]{geometry}

\newtheorem{theorem}{Theorem}[section]
\newtheorem{lemma}[theorem]{Lemma}
\newtheorem{corollary}[theorem]{Corollary}
\theoremstyle{definition}
\newtheorem{definition}[theorem]{Definition}
\newtheorem{remark}[theorem]{Remark}

\newcommand{\Fstar}{F^{\!*}}

\title{Enumerating forcing and strongly forcing $(0,1)$-matrices}
\author{Lei Cao \and Jesse Geneson}
\date{}

\begin{document}
\maketitle

\begin{abstract}
Let $Q$ be a nonzero $s\times t$ $(0,1)$-pattern, and let $m\ge s$ and $n\ge t$. An
$m\times n$ matrix is
\emph{strongly $Q$-forcing} if every $1$-entry belongs to an $s\times t$ submatrix equal to
$Q$. Let $\Fstar(m,n,Q)$ count these matrices. Put $H=m-s+1$ and $W=n-t+1$. We prove
\[
\Fstar(m,n,Q)\ge 2^{HW}.
\]
Writing $r$ and $c$ for the numbers of nonzero rows and columns of $Q$, equality holds if and
only if
\[
(H=1\text{ or }r=1)\qquad\text{and}\qquad(W=1\text{ or }c=1).
\]
Thus the minimum over all nonzero $s\times t$ patterns is $2^{HW}$, attained exactly by
singleton patterns when $H,W>1$, and every fixed nonzero pattern has square growth rate $1$.
We also refine the count by weight. If $o(Q)$ is the number of $1$-entries of $Q$, then the
number of strongly $Q$-forcing matrices at the minimum positive weight $o(Q)$ is
$\binom{H+r-1}{r}\binom{W+c-1}{c}$; at every fixed density in $(0,1)$, the logarithmic growth
rate is the binary entropy when $m$ and $n$ are comparable. For ordinary forcing, where every
$s\times t$ submatrix contains the $1$-entries of $Q$ in their prescribed positions, let
$F(m,n,Q)$ be the number of forcing matrices and let $\mathfrak m(m,n,Q)$ be their minimum
weight. We prove
\[
F(m,n,Q)=2^{mn-\mathfrak m(m,n,Q)}
\quad\text{and}\quad
2^{\,mn-\mathfrak m(m,n,Q)+HW}
\le
F(m,n,Q)\Fstar(m,n,Q)
\le
2^{mn}.
\]
The lower product bound has the same equality cases as the
strong-forcing lower bound above, while the upper product bound is
attained exactly by singleton patterns. In particular, the product is
at least $2$, with equality exactly when $s=m$, $t=n$, and $Q$ is the
all-ones pattern.
\end{abstract}

\medskip
\noindent\textbf{Keywords:} $(0,1)$-matrices; pattern forcing; enumerative combinatorics;
extremal combinatorics.

\smallskip
\noindent\textbf{2020 Mathematics Subject Classification:} 05A16, 05D99.

\section{Introduction}\label{sec:intro}

For a fixed $(0,1)$-matrix $P$, the extremal function $\mathrm{ex}(n,P)$ is the maximum number of
$1$-entries in an $n\times n$ $(0,1)$-matrix that does not contain $P$. Here a matrix \emph{contains}
$P$ if it has a submatrix, chosen by an increasing set of rows and an increasing set of columns, with
a $1$ in every position where $P$ has a $1$; equivalently, if some submatrix can be turned into $P$ by
changing $1$'s to $0$'s. The study of $\mathrm{ex}(n,P)$ was initiated by Bienstock and
Gy\H{o}ri~\cite{BienstockGyori} and by F\"uredi and Hajnal~\cite{FurediHajnal}, the latter of whom
developed a Davenport--Schinzel theory of matrices and conjectured that $\mathrm{ex}(n,P)=O(n)$ for
every permutation matrix $P$. Klazar showed that this conjecture implies the Stanley--Wilf
conjecture on the number of pattern-avoiding
permutations~\cite{Klazar}, and Marcus and Tardos then proved the linear bound itself, settling both
conjectures at once~\cite{MarcusTardos}. Klazar and Marcus later extended the linear bound to ordered
hypergraphs and to higher dimensions~\cite{KlazarMarcus}, and Fox showed that the limiting constants
behind it are typically exponential rather than polynomial in the pattern size~\cite{Fox}.

Tardos studied small excluded submatrices \cite{Tardos}, while Keszegh and Pettie obtained bounds
for further linear and nonlinear patterns \cite{Keszegh,Pettie}. Janzer, Janzer, Magnan, and
Methuku proved tight general bounds for patterns with bounded row weight
\cite{JanzerJanzerMagnanMethuku}, and Pettie and Tardos constructed acyclic patterns whose
extremal functions exceed $n(\log n)^C$ for every fixed $C$ \cite{PettieTardos}. The theory also
extends to ordered graphs and higher-dimensional matrices
\cite{PachTardos,KlazarMarcus,GenesonTian,GenesonTsai}.
Extremal bounds have also been used to enumerate pattern-avoiding permutations. Brualdi and Cao
studied maximal pattern-avoiding matrices and bases of permutation matrices
\cite{BrualdiCaoAvoid}. Eu and Lee recently gave product formulas for maximal
identity-avoiding $(0,1)$-matrices and their symmetry classes \cite{EuLee}. These results concern
avoidance. Here we enumerate matrices satisfying the forcing conditions introduced by Cao and
Tsai.

After studying blockers of pattern-avoiding permutation matrices \cite{BrualdiCaoBlockers},
Brualdi and Cao~\cite{BrualdiCao} introduced $123$-forcing matrices: an $n\times n$
matrix is $123$-forcing if every permutation matrix beneath it contains $I_3$. They
characterized the $123$-forcing matrices with the fewest zeros. Cao and
Tsai~\cite{CT} defined two forcing notions for an arbitrary nonzero $s\times t$ pattern $Q$. An
$m\times n$ $(0,1)$-matrix $A$ is
\emph{$Q$-forcing} if every $s\times t$ submatrix of $A$ can be turned into $Q$ by changing some of its
$1$'s to $0$'s, equivalently if every $s\times t$ submatrix has a $1$ in each position where $Q$ does.
It is \emph{strongly $Q$-forcing} if every $1$-entry of $A$ lies in a copy of $Q$, where a copy of $Q$
reproduces both the $1$'s and the $0$'s of $Q$ on an increasing choice of $s$ rows and $t$ columns, and
the $1$-entry occupies a position of the copy at which $Q$ has a $1$.
The two notions are incomparable. For $Q=(1,0)$, the matrix $(0,0)$ is strongly
$Q$-forcing but not $Q$-forcing, while $(1,1)$ is $Q$-forcing but not strongly $Q$-forcing.

Cao and Tsai studied the extremal functions of both notions. They wrote
$\mathfrak m(m,n,Q)$ for the minimum number of $1$-entries in a $Q$-forcing matrix and
$M(m,n,Q)$ for the maximum number of $1$-entries in a strongly $Q$-forcing matrix. They gave
exact formulas for $\mathfrak m(m,n,Q)$ using corner functions and Young diagrams. They also
proved that $mn-M(m,n,Q)=O(m+n)$ for every fixed $Q$, determined $M(n,n,Q)$ for every $2\times2$ and
$3\times3$ permutation matrix, and conjectured a formula for $M(n,n,I_k)$.

We enumerate both forcing families. We are not aware of earlier enumerative results for either
notion. Write $F(m,n,Q)$ and $\Fstar(m,n,Q)$ for the numbers of ordinary and strongly $Q$-forcing
$m\times n$ matrices. Ordinary forcing is upward closed. In fact, the forcing matrices form the
principal up-set above the Cao--Tsai canonical minimum matrix, and therefore
\[
F(m,n,Q)=2^{mn-\mathfrak m(m,n,Q)}.
\]
Strong forcing need not be upward closed: adding a $1$ may create an entry that belongs to no
exact copy of $Q$.

Put
\[
H:=m-s+1,\qquad W:=n-t+1,\qquad
d_Q(m,n):=mn-\log_2\Fstar(m,n,Q).
\]
Our main theorem is the sharp bound
\[
\Fstar(m,n,Q)\ge2^{HW},\qquad
d_Q(m,n)\le mn-HW.
\]
The construction starts with an arbitrary $H\times W$ free block and surrounds it by a
scaffold modeled on $Q$. Scaffold arms corresponding to zero rows or columns of the free block
are pruned. Every remaining $1$ then belongs to a canonical copy of $Q$, so all $2^{HW}$ free
blocks give distinct strongly forcing matrices.

This bound is sharp for all $m,n,s,t$:
\[
\min_{\substack{Q\in\{0,1\}^{s\times t}\\Q\ne0}}\Fstar(m,n,Q)=2^{HW}.
\]
If $r(Q)$ and $c(Q)$ denote the numbers of nonzero rows and columns of $Q$, then equality in
the lower bound holds if and only if
\[
(H=1\text{ or }r(Q)=1)\quad\text{and}\quad
(W=1\text{ or }c(Q)=1).
\]
In particular, when $H,W>1$, the equality cases are exactly the singleton patterns. On the
diagonal, the lower bound also implies that
$n^{-2}\log_2\Fstar(n,n,Q)\to1$ for every fixed nonzero $Q$.

Ordinary and strong forcing also satisfy sharp two-sided product bounds.
Combining the ordinary count with the strong lower bound gives the
left-hand inequality below. For the right-hand inequality, every
$1$-entry of a strongly forcing matrix is eligible to occupy a
$1$-position of $Q$, so its support lies inside the
$\mathfrak m(m,n,Q)$ entries of the canonical minimum matrix. Hence
\[
2^{\,mn-\mathfrak m(m,n,Q)+HW}
\le
F(m,n,Q)\Fstar(m,n,Q)
\le
2^{mn}.
\]
Equality in the lower bound occurs precisely when
\[
\bigl(H=1\text{ or }r(Q)=1\bigr)
\qquad\text{and}\qquad
\bigl(W=1\text{ or }c(Q)=1\bigr),
\]
while equality in the upper bound occurs exactly for singleton
patterns. In particular, the product equals $2$ exactly when
$s=m$, $t=n$, and $Q$ is the all-ones pattern.

We also refine the strong count by weight. Let $o(Q)$ be the number of $1$-entries of $Q$.
There are no nonzero strongly forcing matrices of weight below $o(Q)$, and at the minimum
positive weight we prove the exact formula
\[
\Fstar(m,n,Q,o(Q))
=\binom{H+r(Q)-1}{r(Q)}\binom{W+c(Q)-1}{c(Q)}.
\]
If $w/(mn)\to\alpha\in(0,1)$ while $m/n$ stays bounded away from $0$ and $\infty$, then
\[
\frac{\log_2\Fstar(m,n,Q,w)}{mn}\longrightarrow
h(\alpha).
\]
We also obtain
exact formulas for all-ones column patterns, column patterns with one zero or with boundary
zero blocks, singleton patterns, and full-height all-ones patterns, and determine
$M(m,n,Q)=mn-z(Q)$ whenever $Q$ has an all-ones row and an all-ones column.

Sections~\ref{sec:construction} and~\ref{sec:mainproof} define the construction and prove
Theorem~\ref{thm:main}.
Section~\ref{sec:special} treats full scaffolds and maximum weight,
Section~\ref{sec:tightness} gives the exact families, Section~\ref{sec:weight} proves the
weight-refined results and the equality classification, and Section~\ref{sec:ordinary} proves
the ordinary count and the ordinary--strong product inequality.

\section{Definitions}

Fix integers $s,t\ge 1$. A \emph{pattern} is an $s\times t$ matrix
$Q=(Q[a,b])_{a\in[s],\,b\in[t]}$ with entries in $\{0,1\}$, where $[m]:=\{1,\dots,m\}$.
We assume throughout that $Q$ has at least one $1$-entry; the all-zero case is treated in
Remark~\ref{rem:nozero}. Throughout we use the convention that an integer interval
$\{a,\dots,b\}$ is \emph{empty} when $a>b$, and that a sum over an empty index set is zero.

\begin{definition}
Let $A$ be an $m\times n$ $(0,1)$-matrix. A \emph{copy of $Q$ in $A$} is a pair of strictly
increasing index sequences
\[
1\le r_1<r_2<\dots<r_s\le m,\qquad 1\le c_1<c_2<\dots<c_t\le n
\]
such that $A[r_a,c_b]=Q[a,b]$ for all $a\in[s]$, $b\in[t]$. Thus a copy reproduces both the
$1$'s and the $0$'s of $Q$. An entry $(i,j)$ of $A$ with $A[i,j]=1$ is \emph{witnessed} if there
is a copy $(r_\bullet,c_\bullet)$ of $Q$ in $A$ and a pair $(a,b)\in[s]\times[t]$ with
$Q[a,b]=1$, $r_a=i$, and $c_b=j$. The matrix $A$ is \emph{strongly $Q$-forcing} if every
$1$-entry of $A$ is witnessed; the all-zero matrix is strongly $Q$-forcing vacuously. Let
$\Fstar(m,n,Q)$ denote the number of $m\times n$ strongly $Q$-forcing matrices, and set
\[
d_Q(m,n):=mn-\log_2 \Fstar(m,n,Q).
\]
We write $\lambda_Q:=\lim_{n\to\infty} n^{-2}\log_2 \Fstar(n,n,Q)$ for the square growth rate,
when this limit exists; that it does is part of the assertion of Theorem~\ref{thm:main}. In
all asymptotic statements, the pattern $Q$ is held fixed.
Finally, $z(Q)$ denotes the number of $0$-entries of $Q$, we write $|A|_1$ for the number of
$1$-entries of a matrix $A$, and $M(m,n,Q)$ for the maximum of $|A|_1$ over all $m\times n$
strongly $Q$-forcing matrices $A$.
\end{definition}

\begin{theorem}\label{thm:main}
Let $Q$ be an $s\times t$ pattern with at least one $1$-entry, and write $H=m-s+1$ and $W=n-t+1$. Then
for all $m\ge s$ and $n\ge t$,
\begin{equation}\label{eq:mainbound}
\Fstar(m,n,Q)\ge 2^{HW}=2^{(m-s+1)(n-t+1)}.
\end{equation}
Equivalently,
\[
d_Q(m,n)\le mn-HW
=(s-1)n+(t-1)m-(s-1)(t-1).
\]
Consequently the square growth rate $\lambda_Q$ exists and equals $1$.
\end{theorem}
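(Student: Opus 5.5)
The plan is to build an injection $X\mapsto A_X$ from the $2^{HW}$ matrices $X\in\{0,1\}^{H\times W}$ into the set of $m\times n$ strongly $Q$-forcing matrices. Fix a $1$-entry $(a_0,b_0)$ of $Q$. Call rows $a_0,\dots,a_0+H-1$ of an $m\times n$ matrix the \emph{free rows} and the remaining $s-1$ rows the \emph{scaffold rows}. Similarly, columns $b_0,\dots,b_0+W-1$ are the \emph{free columns} and the remaining $t-1$ columns are the \emph{scaffold columns}. Order-preservingly identify the scaffold rows with $[s]\setminus\{a_0\}$: row $a$ for $a<a_0$ and row $a+H-1$ for $a>a_0$. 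Identify the scaffold columns with $[t]\setminus\{b_0\}$ in the same way.

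For $X=0$ put $A_X=0$. For $X\neq 0$ define $A_X$ blockwise as follows.
\begin{itemize}
\item On free rows $\times$ free columns, $A_X$ equals $X$.
\item At a scaffold row $\leftrightarrow a$ and scaffold column $\leftrightarrow b$, $A_X$ equals $Q[a,b]$.
\item At a scaffold row $\leftrightarrow a$ and free column $j$, $A_X$ equals $Q[a,b_0]$ if column $j$ of $X$ is nonzero, and $0$ otherwise. This prunes the arms of zero columns.
\item At a free row $i$ and scaffold column $\leftrightarrow b$, $A_X$ equals $Q[a_0,b]$ if row $i$ of $X$ is nonzero, and $0$ otherwise.
\end{itemize}
Injectivity is immediate, because $X$ is read off from the free block of $A_X$.

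The key lemma: for every free row $i$ and free column $j$ with $X[i,j]=1$, the row sequence (scaffold rows together with $i$) and the column sequence (scaffold columns together with $j$) form a copy of $Q$. Here $i$ plays the role of row $a_0$ and $j$ the role of column $b_0$. Both sequences are strictly increasing, because $i$ lies strictly between the scaffold rows indexed below $a_0$ and those indexed above, and likewise for $j$. Checking the entries block by block:
\begin{itemize}
\item Scaffold$\times$scaffold entries give $Q[a,b]$ by definition.
\item Entries at a scaffold row $a$ and column $j$ give $Q[a,b_0]$, since column $j$ of $X$ is nonzero.
\item Entries at row $i$ and a scaffold column $b$ give $Q[a_0,b]$, since row $i$ of $X$ is nonzero.
\item The entry at $(i,j)$ is $1=Q[a_0,b_0]$.
\end{itemize}
So every $0$ and every $1$ of $Q$ is reproduced exactly, and in particular $(i,j)$ is witnessed.

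It remains to witness the other $1$'s of $A_X$, each via one of these canonical copies.
\begin{itemize}
\item A scaffold$\times$scaffold $1$ at $(a,b)$ has $Q[a,b]=1$. It is witnessed by the copy through any $1$-entry of $X$, which exists since $X\ne0$.
\item A $1$ at scaffold row $a$ and free column $j$ forces column $j$ of $X$ to be nonzero. Choose $i$ with $X[i,j]=1$; the copy through $(i,j)$ uses column $j$ as its $b_0$-th column and has $Q[a,b_0]=1$ there.
\item Free-row/scaffold-column $1$'s are symmetric.
\end{itemize}
Hence every $A_X$ is strongly $Q$-forcing, which gives \eqref{eq:mainbound}. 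Expanding $mn-(m-s+1)(n-t+1)$ gives the stated form of $d_Q$.

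For the growth rate, combine this with the trivial bound $\Fstar(n,n,Q)\le 2^{n^2}$:
\[
(n-s+1)(n-t+1)\le \log_2\Fstar(n,n,Q)\le n^2 .
\]
Dividing by $n^2$ and letting $n\to\infty$ with $Q$ fixed shows that the limit $\lambda_Q$ exists and equals $1$.

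The only delicate point is the pruning. Without it, an arm entry in a zero row or column of $X$ could be a $1$ that lies in no copy. Pruning also has to be compatible with exactness of the copies: a pruned entry must never sit at a position of a canonical copy where $Q$ has a $1$. This holds because a canonical copy only uses free rows and free columns of $X$ that are nonzero. The case $X=0$ must be handled separately, since the scaffold$\times$scaffold $1$'s would otherwise be unwitnessed.
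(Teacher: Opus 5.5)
Your proposal is correct and is essentially the paper's own proof. Your $A_X$ is exactly the pruned scaffold $\widehat A(B)$: the anchor is a $1$-entry, the scaffold rows are assigned roles order-preservingly via $a$ or $a+H-1$, the arms in zero rows and columns of $X$ are deleted, and the matrix is set to $0$ when $X=0$. Your key lemma and the case-by-case witnessing are Lemmas~\ref{lem:order}, \ref{lem:canon}, and \ref{lem:pruned}, and the growth rate follows by the same squeeze against $2^{n^2}$.
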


\begin{remark}\label{rem:nozero}
If $Q$ has no $1$-entry then no $1$-entry of any $A$ can be aligned to a $1$ of $Q$, so the only
strongly $Q$-forcing matrix is the all-zero matrix: $\Fstar(m,n,Q)=1$ and $\lambda_Q=0$. Thus the
hypothesis that $Q$ has at least one $1$-entry is necessary.
\end{remark}

\section{The construction}\label{sec:construction}

Fix a $1$-entry $(p,q)$ of $Q$, i.e.\ $Q[p,q]=1$, $p\in[s]$, $q\in[t]$. Fix $m\ge s$ and
$n\ge t$ and put
\[
H:=m-s+1\ge1,\qquad W:=n-t+1\ge1 .
\]
Partition the row index set $[m]$ into three consecutive intervals (empty ones allowed, per the
convention above)
\[
T^{\mathrm r}:=\{1,\dots,p-1\},\quad
\Phi^{\mathrm r}:=\{p,\dots,p+H-1\},\quad
B^{\mathrm r}:=\{p+H,\dots,m\},
\]
of sizes $p-1$, $H$, and $s-p$, whose sum is $H+s-1=m$. Call the elements
of $\Phi^{\mathrm r}$ the \emph{free rows} and those of $T^{\mathrm r}\cup B^{\mathrm r}$ the
\emph{scaffold rows}. Likewise partition the column set $[n]$:
\[
L^{\mathrm c}:=\{1,\dots,q-1\},\quad
\Phi^{\mathrm c}:=\{q,\dots,q+W-1\},\quad
R^{\mathrm c}:=\{q+W,\dots,n\},
\]
of sizes $q-1,\,W,\,t-q$ (again summing to $W+t-1=n$); the elements of $\Phi^{\mathrm c}$ are the
\emph{free columns}.

Assign to each scaffold row a \emph{role} $\rho(r)\in[s]\setminus\{p\}$ and to each scaffold
column a role $\sigma(c)\in[t]\setminus\{q\}$ by
\[
\rho(r)=\begin{cases} r, & r\in T^{\mathrm r},\\ r-H+1, & r\in B^{\mathrm r},\end{cases}
\qquad
\sigma(c)=\begin{cases} c, & c\in L^{\mathrm c},\\ c-W+1, & c\in R^{\mathrm c}.\end{cases}
\]
For $r\in T^{\mathrm r}$ this gives $\rho(r)\in\{1,\dots,p-1\}$; for $r\in B^{\mathrm r}$,
$\rho(p+H)=p+1,\dots,\rho(m)=s$, so $\rho(r)\in\{p+1,\dots,s\}$. Thus $\rho$ is a bijection from
the scaffold rows onto $[s]\setminus\{p\}$, and analogously $\sigma$ onto $[t]\setminus\{q\}$.

\paragraph{The matrices $A(B)$.} For a \emph{free block}
$B\in\{0,1\}^{\Phi^{\mathrm r}\times\Phi^{\mathrm c}}$ define the $m\times n$ matrix $A=A(B)$ by
\begin{equation}\label{eq:def}
A[r,c]=
\begin{cases}
B[r,c], & r\in\Phi^{\mathrm r},\ c\in\Phi^{\mathrm c}\quad(\text{free cell}),\\[2pt]
Q[p,\sigma(c)], & r\in\Phi^{\mathrm r},\ c\notin\Phi^{\mathrm c}\quad(\text{free row, scaffold col}),\\[2pt]
Q[\rho(r),q], & r\notin\Phi^{\mathrm r},\ c\in\Phi^{\mathrm c}\quad(\text{scaffold row, free col}),\\[2pt]
Q[\rho(r),\sigma(c)], & r\notin\Phi^{\mathrm r},\ c\notin\Phi^{\mathrm c}\quad(\text{scaffold cell}).
\end{cases}
\end{equation}
Cao and Tsai used row and column duplication in the proof of Lemma~3.1 of \cite{CT}.
The full scaffold $A(B)$ generalizes that construction. Their construction is the special
case obtained from their chosen anchor and the all-one block $B$. We allow the anchor to be
any $1$-entry and $B$ to be arbitrary; the pruned scaffold in
Section~\ref{sec:mainproof} deletes unsupported entries and makes all $2^{HW}$ free blocks
available for enumeration.
Thus the entries of $A(B)$ \emph{outside} the free block $\Phi^{\mathrm r}\times\Phi^{\mathrm c}$
are completely determined by $Q$ and the anchor $(p,q)$, while the entries \emph{inside} the free
block are the arbitrary bits of $B$. In particular the map $B\mapsto A(B)$ is injective, since $B$
is recovered as the restriction of $A(B)$ to $\Phi^{\mathrm r}\times\Phi^{\mathrm c}$. For brevity
we say a \emph{column of $B$} (resp.\ \emph{row of $B$}) is the restriction of $B$ to a single
free column $c\in\Phi^{\mathrm c}$ (resp.\ free row $r\in\Phi^{\mathrm r}$), and call it
\emph{nonzero} if it contains a $1$.

\section{Proof of Theorem~\ref{thm:main}}\label{sec:mainproof}

For a free row $i\in\Phi^{\mathrm r}$ write $R(i)$ for the sequence obtained by listing
$T^{\mathrm r}\cup\{i\}\cup B^{\mathrm r}$ in increasing order; for a free column
$j\in\Phi^{\mathrm c}$ write $C(j)$ for $L^{\mathrm c}\cup\{j\}\cup R^{\mathrm c}$ in increasing
order. Each is a strictly increasing sequence of length $s$ (resp.\ $t$), since it lists $s$
(resp.\ $t$) distinct indices.

\begin{lemma}\label{lem:order}
Let $i\in\Phi^{\mathrm r}$ and write $R(i)=(r_1<\dots<r_s)$. Then $r_p=i$, and for every
$a\ne p$ the entry $r_a$ is the scaffold row of role $a$, i.e.\ $\rho(r_a)=a$. The analogous
statement holds for $C(j)=(c_1<\dots<c_t)$: $c_q=j$, and $\sigma(c_b)=b$ for every $b\ne q$.
\end{lemma}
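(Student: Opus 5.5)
The argument is a direct count of positions in the increasing list $R(i)$, using only that $T^{\mathrm r}$, $\{i\}$, $B^{\mathrm r}$ occupy consecutive blocks of $[m]$. I will treat the rows; the column statement follows verbatim with $(p,H,T^{\mathrm r},B^{\mathrm r},\rho)$ replaced by $(q,W,L^{\mathrm c},R^{\mathrm c},\sigma)$.

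First, every element of $T^{\mathrm r}=\{1,\dots,p-1\}$ is smaller than $i$, since $i\ge p$. Every element of $B^{\mathrm r}=\{p+H,\dots,m\}$ is larger than $i$, since $i\le p+H-1$. So the increasing listing of $T^{\mathrm r}\cup\{i\}\cup B^{\mathrm r}$ is the elements of $T^{\mathrm r}$ in order, then $i$, then the elements of $B^{\mathrm r}$ in order. Because $|T^{\mathrm r}|=p-1$, we get $r_a=a$ for $a\le p-1$ and $r_p=i$. The remaining $s-p$ positions are then filled by $B^{\mathrm r}$ in order, giving $r_a=p+H+(a-p-1)=a+H-1$ for $p<a\le s$.

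Next, compute the roles from the definition of $\rho$. For $a<p$ we have $r_a=a\in T^{\mathrm r}$, so $\rho(r_a)=r_a=a$. For $a>p$ we have $r_a\in B^{\mathrm r}$, so $\rho(r_a)=r_a-H+1=a$. This proves the claim for rows.

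There is no real obstacle here. The only points needing care are the empty-interval conventions: when $p=1$ or $p=s$, one of $T^{\mathrm r}$ or $B^{\mathrm r}$ is empty, and the argument goes through unchanged. The other point is the index arithmetic $p+H+(a-p-1)=a+H-1$, which is exactly what makes $\rho$ on $B^{\mathrm r}$ return $a$.
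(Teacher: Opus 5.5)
Your proof is correct and is essentially the paper's own argument. You separate $T^{\mathrm r}$, $\{i\}$, $B^{\mathrm r}$ using $p\le i\le p+H-1$, read off $r_p=i$ and $r_a=p+H+(a-p-1)$ for $a>p$, check $\rho(r_a)=a$ directly, and handle the columns by the same substitution; your remark about empty intervals when $p=1$ or $p=s$ just makes explicit what the paper leaves to its standing convention.
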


\begin{proof}
Every element of $T^{\mathrm r}=\{1,\dots,p-1\}$ is $<p\le i$, and every element of
$B^{\mathrm r}=\{p+H,\dots,m\}$ is $>p+H-1\ge i$ (as $i\le p+H-1$). Hence in increasing order the
first $p-1$ entries are $T^{\mathrm r}$ (in order), the $p$-th entry is $i$, and the last $s-p$
entries are $B^{\mathrm r}$ (in order). So $r_p=i$. For $a\le p-1$, $r_a=a\in T^{\mathrm r}$ and
$\rho(r_a)=r_a=a$. For $a\ge p+1$, $r_a$ is the $(a-p)$-th smallest element of $B^{\mathrm r}$,
namely $r_a=p+H+(a-p-1)$, whose role is $\rho(r_a)=\bigl(p+H+a-p-1\bigr)-H+1=a$.

The column proof is identical: every element of $L^{\mathrm c}$ is $<q\le j$ and every element of
$R^{\mathrm c}$ is $>q+W-1\ge j$, so the first $q-1$ entries of $C(j)$ are $L^{\mathrm c}$, the
$q$-th is $j$, and the last $t-q$ are $R^{\mathrm c}$; thus $c_q=j$, while $\sigma(c_b)=c_b=b$ for
$b\le q-1$ and $\sigma(c_b)=(q+W+(b-q-1))-W+1=b$ for $b\ge q+1$.
\end{proof}

\begin{lemma}\label{lem:canon}
Let $i\in\Phi^{\mathrm r}$, $j\in\Phi^{\mathrm c}$, and let $R(i)=(r_1<\dots<r_s)$,
$C(j)=(c_1<\dots<c_t)$. Then for all $(a,b)\in[s]\times[t]$,
\[
A[r_a,c_b]=
\begin{cases}
B[i,j], & (a,b)=(p,q),\\
Q[a,b], & (a,b)\ne(p,q).
\end{cases}
\]
In particular $(R(i),C(j))$ is a copy of $Q$ in $A$ if and only if $B[i,j]=1$.
\end{lemma}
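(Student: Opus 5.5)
The plan is to prove Lemma~\ref{lem:canon} by a direct case analysis on $(a,b)$. Lemma~\ref{lem:order} tells us exactly which kind of row and column each index of the canonical sequences is. Fix $i\in\Phi^{\mathrm r}$ and $j\in\Phi^{\mathrm c}$. By Lemma~\ref{lem:order}, $r_p=i$ is a free row and $c_q=j$ is a free column. Every $r_a$ with $a\neq p$ is a scaffold row with $\rho(r_a)=a$, and every $c_b$ with $b\neq q$ is a scaffold column with $\sigma(c_b)=b$. Thus the pair $(a,b)$ determines which of the four cases of~\eqref{eq:def} applies to the cell $(r_a,c_b)$.

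The four cases are then checked one at a time.
\begin{itemize}
\item If $(a,b)=(p,q)$, the cell is the free cell $(i,j)$, so $A[r_p,c_q]=B[i,j]$ by the first line of~\eqref{eq:def}.
\item If $a=p$ and $b\neq q$, the cell lies in a free row and a scaffold column, so $A[r_a,c_b]=Q[p,\sigma(c_b)]=Q[p,b]=Q[a,b]$.
\item If $a\neq p$ and $b=q$, the cell lies in a scaffold row and a free column, so $A[r_a,c_b]=Q[\rho(r_a),q]=Q[a,q]=Q[a,b]$.
\item If $a\neq p$ and $b\neq q$, the cell is a scaffold cell, so $A[r_a,c_b]=Q[\rho(r_a),\sigma(c_b)]=Q[a,b]$.
\end{itemize}
Together these give the displayed formula.

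For the ``in particular'' clause, note that $R(i)$ and $C(j)$ are strictly increasing sequences of lengths $s$ and $t$, as observed before Lemma~\ref{lem:order}. So $(R(i),C(j))$ is a copy of $Q$ exactly when $A[r_a,c_b]=Q[a,b]$ for all $(a,b)$. By the formula, this holds automatically at every position other than $(p,q)$. At $(p,q)$ the condition reads $B[i,j]=Q[p,q]$, and $Q[p,q]=1$ because $(p,q)$ was chosen as a $1$-entry of $Q$. Hence the pair is a copy if and only if $B[i,j]=1$.

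There is no real obstacle here. The only point needing care is the bookkeeping that matches the case distinction in~\eqref{eq:def}, which is phrased in terms of membership in $\Phi^{\mathrm r}$ and $\Phi^{\mathrm c}$, with the distinction on $a=p$ and $b=q$. That correspondence is exactly what Lemma~\ref{lem:order} supplies.
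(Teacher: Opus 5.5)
Your proposal is correct and follows the paper's proof essentially verbatim. You use Lemma~\ref{lem:order} to identify which of the four cases of \eqref{eq:def} governs $(r_a,c_b)$, according to whether $a=p$ and $b=q$, read off each entry, and then derive the copy criterion from $Q[p,q]=1$.
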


\begin{proof}
Fix $(a,b)$. By Lemma~\ref{lem:order}, $r_a=i$ iff $a=p$ (and otherwise $r_a$ is the scaffold row
of role $a$), and $c_b=j$ iff $b=q$ (otherwise $c_b$ is the scaffold column of role $b$). We read
off $A[r_a,c_b]$ from \eqref{eq:def} according to the four position types.

\emph{Case $(a,b)=(p,q)$:} $r_a=i\in\Phi^{\mathrm r}$, $c_b=j\in\Phi^{\mathrm c}$, a free cell, so
$A[r_a,c_b]=B[i,j]$.

\emph{Case $a=p$, $b\ne q$:} $r_a=i$ is a free row and $c_b$ is the scaffold column of role $b$;
by \eqref{eq:def} (free row, scaffold col), $A[r_a,c_b]=Q[p,\sigma(c_b)]=Q[p,b]=Q[a,b]$.

\emph{Case $a\ne p$, $b=q$:} $r_a$ is the scaffold row of role $a$ and $c_b=j$ is a free column;
by \eqref{eq:def} (scaffold row, free col), $A[r_a,c_b]=Q[\rho(r_a),q]=Q[a,q]=Q[a,b]$.

\emph{Case $a\ne p$, $b\ne q$:} both scaffold; by \eqref{eq:def},
$A[r_a,c_b]=Q[\rho(r_a),\sigma(c_b)]=Q[a,b]$.

Thus every entry equals $Q[a,b]$ except the $(p,q)$ entry, which equals $B[i,j]$. The copy
condition $A[r_a,c_b]=Q[a,b]$ for all $(a,b)$ therefore holds iff the sole exceptional entry
agrees, i.e.\ iff $B[i,j]=Q[p,q]=1$.
\end{proof}

We use the full scaffold $A(B)$ later to count by weight, but it can contain unwitnessed arms
when $B$ has a zero row or column. For the unrestricted count we delete
those arms. Let
\[
\mathcal R(B):=\{i\in\Phi^{\mathrm r}:B[i,j]=1\text{ for some }j\in\Phi^{\mathrm c}\},
\qquad
\mathcal C(B):=\{j\in\Phi^{\mathrm c}:B[i,j]=1\text{ for some }i\in\Phi^{\mathrm r}\}.
\]
Thus $\mathcal R(B)$ and $\mathcal C(B)$ are the sets of nonzero rows and columns of $B$.
Define the \emph{pruned scaffold} $\widehat A(B)$ by
\begin{equation}\label{eq:pruned}
\widehat A(B)[r,c]=
\begin{cases}
B[r,c], & r\in\Phi^{\mathrm r},\ c\in\Phi^{\mathrm c},\\[2pt]
Q[p,\sigma(c)], & r\in\mathcal R(B),\ c\notin\Phi^{\mathrm c},\\[2pt]
Q[\rho(r),q], & r\notin\Phi^{\mathrm r},\ c\in\mathcal C(B),\\[2pt]
Q[\rho(r),\sigma(c)], &
 r\notin\Phi^{\mathrm r},\ c\notin\Phi^{\mathrm c},\ B\ne0,\\[2pt]
0, & \text{otherwise}.
\end{cases}
\end{equation}
Equivalently, starting from $A(B)$, we delete the scaffold-column arms in every zero row of
$B$, the scaffold-row arms in every zero column of $B$, and, when $B=0$, the entire scaffold
corner. In particular $\widehat A(0)=0$.

\begin{lemma}\label{lem:pruned}
For every $B\in\{0,1\}^{\Phi^{\mathrm r}\times\Phi^{\mathrm c}}$, the matrix
$\widehat A(B)$ is strongly $Q$-forcing.
\end{lemma}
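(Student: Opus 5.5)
The plan is to show that every $1$-entry of $\widehat A(B)$ is witnessed by a canonical copy $(R(i),C(j))$ with $B[i,j]=1$. The key observation is that $\widehat A(B)$ agrees with $A(B)$ on every cell lying in such a canonical copy. Once that is established, Lemma~\ref{lem:canon} shows the copy is an exact copy of $Q$ in $\widehat A(B)$. If $B=0$ then $\widehat A(0)=0$ and there is nothing to prove, so assume $B\ne0$.

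\emph{Step 1 (canonical copies survive pruning).} Fix $(i,j)$ with $B[i,j]=1$, so that $i\in\mathcal R(B)$ and $j\in\mathcal C(B)$. Every cell $(r_a,c_b)$ of $R(i)\times C(j)$ has one of four types, by Lemma~\ref{lem:order}.
\begin{itemize}
\item[(a)] The free cell $(i,j)$, where $\widehat A=B=A$.
\item[(b)] A cell $(i,c)$ with $c\notin\Phi^{\mathrm c}$. Here $i\in\mathcal R(B)$, so $\widehat A[i,c]=Q[p,\sigma(c)]=A[i,c]$.
\item[(c)] A cell $(r,j)$ with $r\notin\Phi^{\mathrm r}$. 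Here $j\in\mathcal C(B)$, so $\widehat A[r,j]=Q[\rho(r),q]=A[r,j]$.
\item[(d)] A scaffold cell. Since $B\ne0$, the pruned matrix agrees with $A$ there.
\end{itemize}
So $\widehat A$ and $A$ coincide on $R(i)\times C(j)$. By Lemma~\ref{lem:canon}, $\widehat A[r_a,c_b]=Q[a,b]$ for all $(a,b)$, so $(R(i),C(j))$ is a copy of $Q$ in $\widehat A(B)$.

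\emph{Step 2 (cover each $1$-entry).} Take a $1$-entry $(x,y)$ of $\widehat A(B)$; we must find such a copy in which $(x,y)$ sits at a $1$-position of $Q$. Split by the type of $(x,y)$.
\begin{itemize}
\item \emph{Free cell.} Then $B[x,y]=1$. Use the copy for $(i,j)=(x,y)$, where $(x,y)$ sits at position $(p,q)$ and $Q[p,q]=1$.
\item \emph{Free row, scaffold column.} Nonzero requires $x\in\mathcal R(B)$. Pick $j$ with $B[x,j]=1$ and use the copy for $(x,j)$. By Lemma~\ref{lem:order}, column $y$ appears as $c_{\sigma(y)}$, so $(x,y)$ sits at $(p,\sigma(y))$, and $Q[p,\sigma(y)]=\widehat A[x,y]=1$.
\item \emph{Scaffold row, free column.} This is symmetric: pick $i$ with $B[i,y]=1$; then $(x,y)$ sits at $(\rho(x),q)$.
\item \emph{Scaffold cell.} Here $B\ne0$, so choose any $(i,j)$ with $B[i,j]=1$. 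Then $(x,y)$ sits at $(\rho(x),\sigma(y))$, where $Q$ has a $1$.
\end{itemize}

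The only delicate point is Step 1. We need that pruning never destroys a cell of the witnessing copy, including its $0$-entries. This holds because pruning only zeroes arms attached to zero rows or zero columns of $B$, and the witnessing copy uses only the arms of the nonzero row $i$ and the nonzero column $j$. Everything else is bookkeeping from Lemmas~\ref{lem:order} and~\ref{lem:canon}.
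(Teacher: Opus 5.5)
Your proof is correct and follows essentially the same route as the paper. You verify that when $B[i,j]=1$ the canonical cells $R(i)\times C(j)$ are untouched by pruning, so Lemma~\ref{lem:canon} makes them an exact copy of $Q$. You then witness each of the four types of $1$-entries of $\widehat A(B)$ with a suitably chosen canonical copy.
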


\begin{proof}
Suppose first that $B[i,j]=1$. Then $i\in\mathcal R(B)$, $j\in\mathcal C(B)$, and $B\ne0$.
Consequently none of the entries used by the canonical submatrix on $R(i)$ and $C(j)$ is deleted:
the free-row/scaffold-column entries are retained because $i\in\mathcal R(B)$, the
scaffold-row/free-column entries because $j\in\mathcal C(B)$, and the scaffold-corner entries
because $B\ne0$. Lemmas~\ref{lem:order} and \ref{lem:canon}, or the same four-case verification
using \eqref{eq:pruned}, therefore show that this submatrix of $\widehat A(B)$ is exactly $Q$.
We call it the canonical copy associated with $(i,j)$.

Every $1$-entry of $\widehat A(B)$ is now witnessed. A free $1$ at $(i,j)$ is witnessed at
position $(p,q)$ of its canonical copy. If $(r,c)$ is a scaffold-row/free-column $1$, then
$c\in\mathcal C(B)$; choose $i$ with $B[i,c]=1$. The canonical copy associated with $(i,c)$
contains $(r,c)$ at position $(\rho(r),q)$, where $Q[\rho(r),q]=1$. Similarly, if $(r,c)$ is a
free-row/scaffold-column $1$, choose $j$ with $B[r,j]=1$; the canonical copy associated with
$(r,j)$ witnesses it at position $(p,\sigma(c))$. Finally, a scaffold-corner $1$ can occur only
when $B\ne0$. Choose any $(i,j)$ with $B[i,j]=1$; its canonical copy contains $(r,c)$ at position
$(\rho(r),\sigma(c))$, a $1$-position of $Q$. These four types exhaust the $1$-entries of
$\widehat A(B)$. (When $B=0$, there are no such entries.) Hence $\widehat A(B)$ is strongly
$Q$-forcing.
\end{proof}

\begin{proof}[Proof of Theorem~\ref{thm:main}]
Fix any $1$-entry $(p,q)$ of $Q$. By Lemma~\ref{lem:pruned}, every one of the $2^{HW}$ free
blocks $B\in\{0,1\}^{\Phi^{\mathrm r}\times\Phi^{\mathrm c}}$ produces a strongly
$Q$-forcing matrix $\widehat A(B)$. The map $B\mapsto\widehat A(B)$ is injective, since $B$ is
recovered by restricting $\widehat A(B)$ to the free block. Thus
\[
\Fstar(m,n,Q)\ge 2^{HW}.
\]
Taking logarithms and using
\[
mn-HW=(s-1)n+(t-1)m-(s-1)(t-1)
\]
gives the deficiency bound. On setting $m=n$ and holding $Q$ fixed,
\[
\frac{(n-s+1)(n-t+1)}{n^2}
\le \frac{\log_2\Fstar(n,n,Q)}{n^2}\le1.
\]
Both outer expressions tend to $1$, so the squeeze theorem gives $\lambda_Q=1$.
\end{proof}

\subsection{The full scaffold}

For the weight-refined results in Section~\ref{sec:weight}, we retain the full matrix $A(B)$ of
\eqref{eq:def}. The following observations specify when its entries are witnessed.

By \eqref{eq:def} the $1$-entries of $A(B)$ are of exactly four kinds:
\emph{free} $1$'s ($r,c$ free with $B[r,c]=1$);
\emph{scaffold-row arms} ($r$ scaffold, $c$ free, $Q[\rho(r),q]=1$);
\emph{scaffold-column arms} ($r$ free, $c$ scaffold, $Q[p,\sigma(c)]=1$);
and \emph{corner} $1$'s ($r,c$ scaffold, $Q[\rho(r),\sigma(c)]=1$).

\begin{lemma}\label{lem:witness}
Let $B\in\{0,1\}^{\Phi^{\mathrm r}\times\Phi^{\mathrm c}}$.
\begin{enumerate}
\item[\textup{(a)}] Every free $1$-entry of $A(B)$ is witnessed.
\item[\textup{(b)}] If every column of $B$ is nonzero, every scaffold-row arm $1$-entry is
witnessed.
\item[\textup{(c)}] If every row of $B$ is nonzero, every scaffold-column arm $1$-entry is
witnessed.
\item[\textup{(d)}] If $B\ne 0$, every corner $1$-entry is witnessed.
\end{enumerate}
\end{lemma}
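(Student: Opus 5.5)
The plan is to reuse, for each of the four kinds of $1$-entries of $A(B)$, the canonical copies of Lemma~\ref{lem:canon}. Here we work with the full scaffold $A(B)$, not the pruned one, so nothing is deleted. Lemma~\ref{lem:canon} therefore applies verbatim: for every free row $i$ and free column $j$ with $B[i,j]=1$, the pair $(R(i),C(j))$ is a copy of $Q$ in $A(B)$. Each part then reduces to choosing a suitable pair $(i,j)$ with $B[i,j]=1$ whose canonical copy passes through the given entry, and checking via Lemma~\ref{lem:order} that the entry sits at a $1$-position of $Q$.

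For (a), take a free $1$ at $(i,j)$. Then $B[i,j]=1$, so $(R(i),C(j))$ is a copy. By Lemma~\ref{lem:order}, $r_p=i$ and $c_q=j$, so the entry occupies position $(p,q)$, and $Q[p,q]=1$.

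For (b), let $(r,c)$ be a scaffold-row arm $1$, so $r$ is a scaffold row, $c\in\Phi^{\mathrm c}$, and $Q[\rho(r),q]=1$. Since column $c$ of $B$ is nonzero, we can pick $i$ with $B[i,c]=1$. Then $(R(i),C(c))$ is a copy. By Lemma~\ref{lem:order}, $r$ appears in $R(i)$ as $r_{\rho(r)}$ and $c=c_q$, so the entry is at position $(\rho(r),q)$, which is a $1$-position of $Q$.

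Part (c) is symmetric. Pick $j$ with $B[r,j]=1$; the entry then lies at position $(p,\sigma(c))$ of the copy $(R(r),C(j))$.

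For (d), pick any $(i,j)$ with $B[i,j]=1$. The corner entry $(r,c)$ lies in $R(i)\times C(j)$ at position $(\rho(r),\sigma(c))$, and the corner condition says $Q[\rho(r),\sigma(c)]=1$.

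There is no real obstacle here. The one point to state carefully is that scaffold rows belong to every $R(i)$ and scaffold columns to every $C(j)$, always with the index given by their role; this is exactly Lemma~\ref{lem:order}. The hypotheses of nonzero columns, nonzero rows, and $B\ne0$ are used only to guarantee that the required $1$ of $B$ exists.
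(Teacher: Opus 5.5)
Your proposal is correct and follows the paper's proof essentially step for step. In each part you choose a $1$ of $B$ (guaranteed by the hypothesis), take its canonical copy $(R(i),C(j))$ from Lemma~\ref{lem:canon}, and use Lemma~\ref{lem:order} to place the given entry at the $1$-position $(p,q)$, $(\rho(r),q)$, $(p,\sigma(c))$, or $(\rho(r),\sigma(c))$ of $Q$.
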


\begin{proof}
(a) Let $(i,j)$ be a free $1$-entry, so $i\in\Phi^{\mathrm r}$, $j\in\Phi^{\mathrm c}$,
$B[i,j]=1$. By Lemma~\ref{lem:canon} the pair $(R(i),C(j))$ is a copy of $Q$, and in it the
position $(a,b)=(p,q)$ satisfies $r_a=i$, $c_b=j$, $Q[p,q]=1$. Hence $(i,j)$ is witnessed.

(b) Let $(r,c)$ be a scaffold-row arm $1$-entry: $r$ a scaffold row of role $a:=\rho(r)\ne p$,
$c\in\Phi^{\mathrm c}$, and $A[r,c]=Q[a,q]=1$. By hypothesis the column $c$ of $B$ is nonzero, so
choose $i\in\Phi^{\mathrm r}$ with $B[i,c]=1$. Consider $(R(i),C(c))$. Since $B[i,c]=1$,
Lemma~\ref{lem:canon} shows it is a copy of $Q$. Its rows $R(i)=T^{\mathrm r}\cup\{i\}\cup
B^{\mathrm r}$ contain $r$ at role $a$ by Lemma~\ref{lem:order}; its columns $C(c)$ place $c$ at
role $q$. Hence $A[r,c]$ occupies position $(a,q)$ of this copy, where $Q[a,q]=1$. So $(r,c)$ is
witnessed.

(c) Symmetric to (b): let $(r,c)$ be a scaffold-column arm, $r\in\Phi^{\mathrm r}$, $c$ a
scaffold column of role $b:=\sigma(c)\ne q$, $A[r,c]=Q[p,b]=1$. The row $r$ of $B$ is nonzero;
pick $j\in\Phi^{\mathrm c}$ with $B[r,j]=1$. Then $(R(r),C(j))$ is a copy of $Q$ (as
$B[r,j]=1$), it contains $r$ at role $p$ and $c$ at role $b$, and $Q[p,b]=1$, so $(r,c)$ is
witnessed.

(d) Let $(r,c)$ be a corner $1$-entry: $r$ a scaffold row of role $a:=\rho(r)\ne p$, $c$ a
scaffold column of role $b:=\sigma(c)\ne q$, $A[r,c]=Q[a,b]=1$. As $B\ne0$ choose
$i\in\Phi^{\mathrm r}$, $j\in\Phi^{\mathrm c}$ with $B[i,j]=1$. Then $(R(i),C(j))$ is a copy of
$Q$ containing $r$ at role $a$ and $c$ at role $b$, with $Q[a,b]=1$; so $(r,c)$ is witnessed.
\end{proof}

\begin{corollary}\label{cor:good}
If $B$ has no zero row and no zero column, then $A(B)$ is strongly $Q$-forcing.
\end{corollary}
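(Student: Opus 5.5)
The corollary follows directly from Lemma~\ref{lem:witness}. Suppose $B$ has no zero row and no zero column. The plan is to classify an arbitrary $1$-entry of $A(B)$ into one of the four kinds listed before that lemma and show that each kind is witnessed. The four kinds are free $1$'s, scaffold-row arms, scaffold-column arms, and corner $1$'s. The case split in \eqref{eq:def} depends only on whether the row index lies in $\Phi^{\mathrm r}$ and whether the column index lies in $\Phi^{\mathrm c}$. So these four kinds exhaust all $1$-entries of $A(B)$.

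For each kind we invoke the corresponding part of Lemma~\ref{lem:witness}:
\begin{itemize}
\item \emph{Free $1$'s} are witnessed by part~(a), which needs no hypothesis.
\item \emph{Scaffold-row arms} are witnessed by part~(b), because every column of $B$ is nonzero.
\item \emph{Scaffold-column arms} are witnessed by part~(c), because every row of $B$ is nonzero.
\item \emph{Corner $1$'s} are witnessed by part~(d), provided $B\ne0$.
\end{itemize}

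The last condition needs a short check, since $H,W\ge1$ means $\Phi^{\mathrm r}$ and $\Phi^{\mathrm c}$ are nonempty. Hence $B$ has at least one row, and that row is nonzero by hypothesis, so $B\ne0$. This is the only point requiring any care. Once all four kinds are witnessed, the definition of strong $Q$-forcing is satisfied and the corollary follows.
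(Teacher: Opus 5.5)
Your proposal is correct and follows the paper's proof essentially line for line. You classify each $1$-entry of $A(B)$ as free, scaffold-row arm, scaffold-column arm, or corner, and apply parts (a)--(d) of Lemma~\ref{lem:witness} in turn, using $H,W\ge1$ and the absence of zero rows to get $B\ne0$ for part (d).
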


\begin{proof}
By Lemma~\ref{lem:witness}: part (a) witnesses every free $1$; ``no zero column'' gives the
hypothesis of (b), witnessing every scaffold-row arm $1$; ``no zero row'' gives the hypothesis of
(c), witnessing every scaffold-column arm $1$; and (since $H,W\ge1$) ``no zero row'' forces
$B\ne0$, so (d) witnesses every corner $1$. As every $1$-entry of $A(B)$ is a free $1$, a
scaffold-row arm, a scaffold-column arm, or a corner $1$, all are witnessed.
\end{proof}

\section{Full scaffolds and the maximum number of ones}\label{sec:special}

The pruned scaffold proves the unrestricted lower bound, while the full scaffold has a weight
formula used below. When the anchor is the unique $1$ in its row or column, some
of the nonzero-row or nonzero-column conditions in Corollary~\ref{cor:good} can be dropped. The
next lemma records the variants needed in Section~\ref{sec:weight}.

\begin{lemma}\label{lem:special}
Let $(p,q)$ be a $1$-entry of $Q$ and form $A(B)$ with anchor $(p,q)$ as in Section~\ref{sec:construction}.
\begin{enumerate}
\item[\textup{(a)}] If $Q[p,q]$ is the only $1$-entry in row $p$ of $Q$, then $A(B)$ is strongly
$Q$-forcing whenever every column of $B$ is nonzero. No condition on the rows of $B$ is needed.
\item[\textup{(b)}] If $Q[p,q]$ is the only $1$-entry in column $q$ of $Q$, then $A(B)$ is
strongly $Q$-forcing whenever every row of $B$ is nonzero.
\item[\textup{(c)}] If $Q[p,q]$ is the only $1$-entry in both its row and its column, then $A(B)$
is strongly $Q$-forcing for every nonzero $B$.
\end{enumerate}
\end{lemma}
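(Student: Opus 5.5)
The argument reuses the four-type classification of $1$-entries of $A(B)$ and Lemma~\ref{lem:witness}. The point is that a uniqueness hypothesis on the anchor kills one family of arms entirely, so the corresponding hypothesis in Corollary~\ref{cor:good} becomes vacuous.

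For (a), assume $Q[p,q]$ is the only $1$ in row $p$ of $Q$. A scaffold-column arm at $(r,c)$, with $r$ free and $c$ a scaffold column, has value $Q[p,\sigma(c)]$ by \eqref{eq:def}. Since $\sigma(c)\ne q$, this value is $0$, so $A(B)$ has no scaffold-column arm $1$-entries at all. The remaining $1$-entries are handled by Lemma~\ref{lem:witness}:
\begin{itemize}
\item free $1$'s are witnessed by part (a);
\item scaffold-row arms are witnessed by part (b), because every column of $B$ is nonzero;
\item corner $1$'s are witnessed by part (d), because $W\ge1$ and a nonzero column force $B\ne0$.
\end{itemize}
No row condition was used, so $A(B)$ is strongly $Q$-forcing.

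Part (b) is the transpose of (a). If $Q[p,q]$ is the only $1$ in column $q$, then every scaffold-row arm has value $Q[\rho(r),q]$ with $\rho(r)\ne p$, hence equals $0$. Free $1$'s are witnessed by Lemma~\ref{lem:witness}(a), scaffold-column arms by (c) using nonzero rows, and corners by (d), since $H\ge1$ gives $B\ne0$.

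For (c), both arm families vanish by the two observations above. The only possible $1$-entries are then free $1$'s and corner $1$'s, and Lemma~\ref{lem:witness}(a) and (d) witness them whenever $B\ne0$.

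There is no real obstacle here. The only care needed is to check that the value formulas in \eqref{eq:def} use exactly $Q[p,\sigma(c)]$ and $Q[\rho(r),q]$, with roles $\sigma(c)\ne q$ and $\rho(r)\ne p$, which holds because $\rho$ and $\sigma$ are bijections onto $[s]\setminus\{p\}$ and $[t]\setminus\{q\}$.
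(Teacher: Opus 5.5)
Your proposal is correct and matches the paper's proof essentially step for step. The uniqueness hypothesis on the anchor removes the scaffold-column arms, the scaffold-row arms, or both, because $\sigma(c)\ne q$ and $\rho(r)\ne p$. The remaining free, arm, and corner $1$-entries are then witnessed by the corresponding parts of Lemma~\ref{lem:witness}, with a nonzero column or row of $B$ guaranteeing $B\ne0$ for the corner entries.
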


\begin{proof}
Recall that every $1$-entry of $A(B)$ is a free $1$, a scaffold-row arm ($Q[\rho(r),q]=1$), a
scaffold-column arm ($Q[p,\sigma(c)]=1$), or a corner $1$.

(a) If $Q[p,q]$ is the only $1$-entry in row $p$, then $Q[p,\sigma(c)]=0$ for every scaffold
column $c$ (as $\sigma(c)\ne q$), so $A(B)$ has no scaffold-column arm $1$-entries. The remaining
kinds are handled by Lemma~\ref{lem:witness}: part (a) witnesses every free $1$ unconditionally;
if every column of $B$ is nonzero, part (b) witnesses every scaffold-row arm; and a nonzero column
forces $B\ne0$, so part (d) witnesses every corner $1$. Hence $A(B)$ is strongly $Q$-forcing, with
no condition imposed on the rows of $B$.

(b) If $Q[p,q]$ is the only $1$-entry in column $q$, then $A(B)$ has no scaffold-row arm
$1$-entries. Lemma~\ref{lem:witness}(a) witnesses every free $1$-entry, part (c) witnesses
every scaffold-column arm because each row of $B$ is nonzero, and part (d) witnesses every
corner $1$-entry because a nonzero row implies $B\ne0$. Thus $A(B)$ is strongly
$Q$-forcing.

(c) If $Q[p,q]$ is the only $1$-entry in both its row and its column, then $A(B)$ has neither
scaffold-row nor scaffold-column arm $1$-entries; only free and corner $1$'s remain. Part (a) of
Lemma~\ref{lem:witness} witnesses the free $1$'s, and if $B\ne0$ part (d) witnesses the corner
$1$'s.
\end{proof}

We turn to the maximum number of $1$-entries a strongly $Q$-forcing matrix can have when $Q$ has
a full row and a full column.

\begin{lemma}\label{lem:maxweight}
Let $Q$ be an $s\times t$ pattern such that row $i$ and column $j$ of $Q$ are all-one. Then, for
$m\ge s$ and $n\ge t$,
\[
M(m,n,Q)=mn-z(Q),
\]
where $z(Q)$ is the number of zero entries of $Q$.
\end{lemma}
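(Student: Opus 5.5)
Two inequalities are needed. For the lower bound $M(m,n,Q)\ge mn-z(Q)$, I will use the full scaffold with anchor $(p,q)=(i,j)$ and the all-ones free block $B=J$. Since $J$ has no zero row or column, Corollary~\ref{cor:good} shows that $A(J)$ is strongly $Q$-forcing.

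It remains to count the zeros of $A(J)$. By \eqref{eq:def}, a free cell is $1$. A free-row/scaffold-column cell equals $Q[i,\sigma(c)]=1$, because row $i$ of $Q$ is all-one. A scaffold-row/free-column cell equals $Q[\rho(r),j]=1$, because column $j$ is all-one. A scaffold cell equals $Q[\rho(r),\sigma(c)]$. Since $\rho$ and $\sigma$ are bijections onto $[s]\setminus\{i\}$ and $[t]\setminus\{j\}$, the zeros of $A(J)$ correspond bijectively to the zeros of $Q$ lying outside row $i$ and column $j$. Every zero of $Q$ lies there, since that row and column are all-one. Hence $|A(J)|_1=mn-z(Q)$.

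For the upper bound $M(m,n,Q)\le mn-z(Q)$, let $A$ be strongly $Q$-forcing. If $A=0$ the bound is trivial, since $z(Q)\le st-1\le mn$. Otherwise $A$ has a $1$-entry, and that entry is witnessed by some copy $(r_\bullet,c_\bullet)$ of $Q$ in $A$. This copy reproduces the zeros of $Q$ at the $z(Q)$ distinct cells $(r_a,c_b)$ with $Q[a,b]=0$, so $A$ has at least $z(Q)$ zeros and $|A|_1\le mn-z(Q)$.

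Both directions are short, and the only step needing care is the bijection in the zero count for the lower bound. The upper bound uses only the existence of a single copy. Note that the full-row and full-column hypothesis is what makes every arm entry equal to $1$; without it the arms would contribute extra zeros, and only the bound $|A(J)|_1\le mn-z(Q)$ would hold.
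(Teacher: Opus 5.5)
Your proposal is correct and follows the paper's proof essentially step for step. The upper bound uses the $z(Q)$ distinct zeros reproduced by a single witnessing copy. The lower bound uses the full scaffold $A(J)$ anchored at $(i,j)$, certified by Corollary~\ref{cor:good}; its only zeros are the scaffold cells, which reproduce the zeros of $Q$ lying off row $i$ and column $j$.
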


\begin{proof}
\emph{Upper bound.} Let $A$ be an $m\times n$ strongly $Q$-forcing matrix. If $A$ is the all-zero
matrix then $|A|_1=0\le mn-z(Q)$. Otherwise $A$ has a $1$-entry, which is witnessed and hence lies
in a copy of $Q$ in $A$. That copy reproduces all $z(Q)$ zero entries of $Q$, so $A$ has at least
$z(Q)$ zero entries and $|A|_1\le mn-z(Q)$. Thus $M(m,n,Q)\le mn-z(Q)$.

\emph{Lower bound.} Choose the anchor $(p,q)=(i,j)$ in the construction and let $B$ be the
all-one block in $\{0,1\}^{\Phi^{\mathrm r}\times\Phi^{\mathrm c}}$ (so $B$ has no zero row and no
zero column). Inspecting \eqref{eq:def}: every free cell equals $1$; every scaffold-row arm
equals $Q[\rho(r),j]=1$ since column $j$ of $Q$ is all-one; every scaffold-column arm equals
$Q[i,\sigma(c)]=1$ since row $i$ of $Q$ is all-one. The only possible zeros lie in the scaffold
cells, whose values $Q[\rho(r),\sigma(c)]$ range, as $(\rho(r),\sigma(c))$ runs over
$([s]\setminus\{i\})\times([t]\setminus\{j\})$, over exactly the entries of $Q$ outside row $i$
and column $j$. Since every zero of $Q$ lies off the all-one row $i$ and column $j$, the scaffold
cells reproduce each of the $z(Q)$ zeros of $Q$ exactly once. Hence $A(B)$ has exactly $z(Q)$ zero
entries, i.e.\ $|A(B)|_1=mn-z(Q)$. By Corollary~\ref{cor:good}, $A(B)$ is strongly $Q$-forcing, so
$M(m,n,Q)\ge mn-z(Q)$.

Combining the two bounds gives $M(m,n,Q)=mn-z(Q)$.
\end{proof}

\section{Exact families and sharpness}\label{sec:tightness}

The universal bound in Theorem~\ref{thm:main} is attained by every pattern having a single
$1$-entry. Thus, among all nonzero patterns of prescribed dimensions, the minimum possible
strong-forcing count is determined exactly. We prove that extremal statement below, together with
closed formulas for several other column and rectangular families.

Throughout, $Q$ is an $s\times t$ pattern with at least one $1$-entry, and we write
$H=m-s+1$ and $W=n-t+1$. We denote by
$J_{s,1}$ the $s\times1$ all-ones pattern (a single column of $s$ ones), by $J_{1,t}$ the $1\times t$
all-ones pattern, and by $J_{s,t}$ the $s\times t$ all-ones pattern.

\subsection{Transpose symmetry}

We first record a symmetry used repeatedly below. For an $m\times n$ matrix $A$ let $A^{\mathsf T}$
denote its $n\times m$ transpose, $A^{\mathsf T}[j,i]=A[i,j]$, and for an $s\times t$ pattern $Q$ let
$Q^{\mathsf T}$ denote its $t\times s$ transpose.

\begin{lemma}\label{lem:transpose}
For every $s\times t$ pattern $Q$ with a $1$-entry and all $m\ge s$, $n\ge t$,
\[
\Fstar(m,n,Q)=\Fstar(n,m,Q^{\mathsf T}).
\]
\end{lemma}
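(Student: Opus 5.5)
The plan is to exhibit an explicit bijection: transposition $A\mapsto A^{\mathsf T}$ from $m\times n$ matrices to $n\times m$ matrices. We show it carries strongly $Q$-forcing matrices exactly onto strongly $Q^{\mathsf T}$-forcing matrices. Transposition is an involution, so it is a bijection between all $m\times n$ and all $n\times m$ $(0,1)$-matrices. It therefore suffices to show one implication: $A$ strongly $Q$-forcing implies $A^{\mathsf T}$ strongly $Q^{\mathsf T}$-forcing. The reverse follows by applying the same implication to $Q^{\mathsf T}$ and $A^{\mathsf T}$, using $(Q^{\mathsf T})^{\mathsf T}=Q$ and $(A^{\mathsf T})^{\mathsf T}=A$. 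The hypotheses also match up: $Q^{\mathsf T}$ is a $t\times s$ pattern with a $1$-entry, and $n\ge t$, $m\ge s$.

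The key step is the correspondence of copies. Suppose $(r_\bullet,c_\bullet)$ is a copy of $Q$ in $A$, with $r_1<\dots<r_s$ and $c_1<\dots<c_t$. Then $(c_\bullet,r_\bullet)$ is a pair of strictly increasing sequences of lengths $t$ and $s$. It satisfies
\[
A^{\mathsf T}[c_b,r_a]=A[r_a,c_b]=Q[a,b]=Q^{\mathsf T}[b,a]
\]
for all $(b,a)\in[t]\times[s]$, so it is a copy of $Q^{\mathsf T}$ in $A^{\mathsf T}$. Moreover, the position $(a,b)$ of the copy is a $1$-position of $Q$ if and only if $(b,a)$ is a $1$-position of $Q^{\mathsf T}$.

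Now take any $1$-entry $(j,i)$ of $A^{\mathsf T}$. Then $(i,j)$ is a $1$-entry of $A$. Since $A$ is strongly $Q$-forcing, $(i,j)$ is witnessed by some copy $(r_\bullet,c_\bullet)$ and some $(a,b)$ with $Q[a,b]=1$, $r_a=i$, and $c_b=j$. The swapped copy $(c_\bullet,r_\bullet)$ then witnesses $(j,i)$ at position $(b,a)$, because $Q^{\mathsf T}[b,a]=1$. Hence every $1$-entry of $A^{\mathsf T}$ is witnessed, and the all-zero case holds vacuously. This gives the equality of counts.

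There is no real obstacle here. The only point needing care is the index bookkeeping: checking that the swapped pair still consists of strictly increasing sequences of the correct lengths, and that the witnessing position transposes along with the entry.
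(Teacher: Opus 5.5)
Your proof is correct and follows essentially the same route as the paper: transposition, with each copy $(r_\bullet,c_\bullet)$ of $Q$ in $A$ corresponding to the copy $(c_\bullet,r_\bullet)$ of $Q^{\mathsf T}$ in $A^{\mathsf T}$, and the witnessing position transposing along with the entry. Your explicit use of the involution to get the reverse implication is a slightly more careful version of the paper's remark that the correspondence ``preserves whether an entry is witnessed.''
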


\begin{proof}
If rows $r_1<\dots<r_s$ and columns $c_1<\dots<c_t$ form a copy of $Q$ in $A$, then
\[
A^{\mathsf T}[c_b,r_a]=A[r_a,c_b]=Q[a,b]=Q^{\mathsf T}[b,a].
\]
Thus the same columns and rows form a copy of $Q^{\mathsf T}$ in $A^{\mathsf T}$. This
correspondence maps $(i,j)$ to $(j,i)$ and preserves whether an entry is witnessed. Hence
$A$ is strongly $Q$-forcing if and only if $A^{\mathsf T}$ is strongly
$Q^{\mathsf T}$-forcing. Transposition is a bijection, so the counts are equal.
\end{proof}

\subsection{The all-ones row and column families}

\begin{lemma}\label{lem:colchar}
Let $Q=J_{s,1}$ and let $A$ be an $m\times n$ matrix with $m\ge s$. Then $A$ is strongly
$Q$-forcing if and only if every column of $A$ contains either $0$ ones or at least $s$ ones.
\end{lemma}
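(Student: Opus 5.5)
The plan is to unpack the definition of a copy of $J_{s,1}$ and then prove the two directions separately. A copy of $Q=J_{s,1}$ in $A$ consists of rows $r_1<\dots<r_s$ and a single column $c_1$ with $A[r_a,c_1]=1$ for all $a\in[s]$. Every position of $Q$ is a $1$-position, and $Q$ has no zeros, so a copy imposes no zero constraints. Hence a copy of $J_{s,1}$ is exactly a set of $s$ ones lying in one column of $A$.

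For the forward direction, suppose $A$ is strongly $J_{s,1}$-forcing, and let $c$ be a column containing at least one $1$, say at $(i,c)$. This entry is witnessed, so there is a copy $(r_\bullet,c_1)$ with $c_1=c$ and $r_a=i$ for some $a$. Column $c$ then has ones at the $s$ distinct rows $r_1,\dots,r_s$, so it contains at least $s$ ones. Consequently every column has either $0$ ones or at least $s$ ones.

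For the reverse direction, suppose every column has $0$ or at least $s$ ones, and let $(i,j)$ be a $1$-entry. Column $j$ is nonzero, so it has at least $s$ ones. Choose any $s$ of its one-positions that include row $i$, and list them in increasing order as $r_1<\dots<r_s$; say $i=r_a$. Then $(r_\bullet,(j))$ is a copy of $J_{s,1}$, and it places $(i,j)$ at position $(a,1)$, where $Q[a,1]=1$. So $(i,j)$ is witnessed, and $A$ is strongly forcing. The all-zero case holds vacuously.

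No real obstacle arises. The only point to be careful about is the one used in the reverse direction: the chosen $s$ rows must include $i$, which is possible precisely because column $j$ contains at least $s$ ones.
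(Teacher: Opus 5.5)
Your proposal is correct and follows essentially the same argument as the paper. The paper phrases the forward direction as a contrapositive: a column with $1\le k\le s-1$ ones has none of its ones witnessed. This is only a cosmetic difference from your direct argument that a witnessed $1$ forces $s$ ones in its column.
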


\begin{proof}
Since $J_{s,1}$ is a single column of $s$ ones, a copy of $Q$ in $A$ is a choice of $s$ rows
$r_1<\dots<r_s$ and a single column $c$ with $A[r_a,c]=1$ for all $a$; this copy witnesses
the $1$-entries $(r_1,c),\dots,(r_s,c)$, all of which lie in column $c$.

($\Leftarrow$) Suppose every column has $0$ or at least $s$ ones, and let $(i,j)$ be a $1$-entry.
Column $j$ then contains a $1$, so it has at least $s$ ones, say in rows including $i$. Choosing $i$
together with $s-1$ further one-rows of column $j$ and ordering them increasingly yields a copy of
$Q$ in column $j$ that witnesses $(i,j)$. Hence $A$ is strongly $Q$-forcing.

($\Rightarrow$) Suppose some column $j$ has $k$ ones with $1\le k\le s-1$, and let $(i,j)$ be one of
them. Any $J_{s,1}$-copy occupies a single column; to witness $(i,j)$ that column must be $j$, and
the copy needs $s$ rows on which column $j$ is all $1$. Only $k<s$ such rows exist, so no copy
witnesses $(i,j)$ and $A$ is not strongly $Q$-forcing. Contrapositively, if $A$ is strongly
$Q$-forcing then every column has $0$ or at least $s$ ones.
\end{proof}

\begin{theorem}\label{thm:colformula}
For all $m\ge s\ge1$ and $n\ge1$,
\[
\Fstar(m,n,J_{s,1})=\Bigl(2^{m}-\sum_{i=1}^{s-1}\binom mi\Bigr)^{\!n}
=\Bigl(1+\sum_{k=s}^{m}\binom mk\Bigr)^{\!n},
\]
and, by transpose symmetry, for all $n\ge t\ge1$ and $m\ge1$,
\[
\Fstar(m,n,J_{1,t})=\Bigl(2^{n}-\sum_{i=1}^{t-1}\binom ni\Bigr)^{\!m}
=\Bigl(1+\sum_{k=t}^{n}\binom nk\Bigr)^{\!m}.
\]
\end{theorem}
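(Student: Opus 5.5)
The proof reduces to Lemma~\ref{lem:colchar}, which characterizes strongly $J_{s,1}$-forcing matrices column by column: $A$ is strongly $J_{s,1}$-forcing exactly when every column has weight $0$ or weight at least $s$. The condition on each column involves only that column. So the set of strongly forcing $m\times n$ matrices is the Cartesian product, over the $n$ columns, of the set of admissible columns $S\subseteq\{0,1\}^m$, where $S$ consists of the vectors of weight $0$ or weight in $\{s,\dots,m\}$. Hence $\Fstar(m,n,J_{s,1})=|S|^n$.

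Next we count $S$ by weight. There is one vector of weight $0$ and $\binom mk$ vectors of weight $k$, so
\[
|S|=1+\sum_{k=s}^m\binom mk.
\]
Since $\sum_{k=0}^m\binom mk=2^m$, removing the weights $1,\dots,s-1$ from the full count gives $|S|=2^m-\sum_{i=1}^{s-1}\binom mi$. This is the first formula. When $s=1$ the sum over $i$ is empty and both expressions equal $2^m$, consistent with every matrix being strongly $J_{1,1}$-forcing.

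For the row family we apply Lemma~\ref{lem:transpose} with $Q=J_{s,1}$ and $s$ replaced by $t$. Since $J_{t,1}^{\mathsf T}=J_{1,t}$, we get $\Fstar(m,n,J_{1,t})=\Fstar(n,m,J_{t,1})$, and the hypothesis $n\ge t$ is exactly what the column case needs with the roles of $m$ and $n$ swapped. Substituting into the first formula gives the second. Lemma~\ref{lem:transpose} is stated with $m\ge s$, $n\ge t$, and here those conditions become $n\ge t$, $m\ge1$, so the hypotheses match.

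There is no real obstacle here. The only point needing care is the product decomposition, which holds because Lemma~\ref{lem:colchar} gives an if-and-only-if condition that is local to each column, together with the degenerate case $s=1$ noted above.
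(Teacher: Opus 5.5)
Your proposal is correct and follows the paper's own proof essentially step for step. The column-local criterion of Lemma~\ref{lem:colchar} makes the count factor as $|S|^n$ with $|S|=1+\sum_{k=s}^m\binom mk=2^m-\sum_{i=1}^{s-1}\binom mi$, and the row formula follows from Lemma~\ref{lem:transpose} because $J_{1,t}^{\mathsf T}=J_{t,1}$.
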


\begin{proof}
By Lemma~\ref{lem:colchar}, $A$ is strongly $J_{s,1}$-forcing if and only if each of its $n$
columns, as a vector in $\{0,1\}^m$, has weight $0$ or weight at least $s$. This is a condition on
each column separately, with no interaction between columns (a $J_{s,1}$-copy lives in one column),
so the columns may be chosen independently and
\[
\Fstar(m,n,J_{s,1})=a(m,s)^{n},\qquad
a(m,s):=\#\{v\in\{0,1\}^m: \mathrm{wt}(v)=0\text{ or }\mathrm{wt}(v)\ge s\}.
\]
Counting admissible columns by weight, $a(m,s)=\binom m0+\sum_{k=s}^{m}\binom mk=1+\sum_{k=s}^m\binom mk$.
Since $\sum_{k=0}^m\binom mk=2^m$, removing the weights $1,\dots,s-1$ gives the equivalent form
$a(m,s)=2^m-\sum_{i=1}^{s-1}\binom mi$. This proves the column formula; the row formula follows from
Lemma~\ref{lem:transpose} with $Q=J_{1,t}$, since $J_{1,t}^{\mathsf T}=J_{t,1}$ and the column
formula applies with $m,n,s$ replaced by $n,m,t$.
\end{proof}

\begin{theorem}\label{thm:onezero}
Let $Q$ be an $s\times1$ pattern with exactly one zero, in position $p$, where $s\ge2$.
Then the following formulas hold for all $m\ge s$ and $n\ge1$.

If $p=1$ or $p=s$, then
\[
\Fstar(m,n,Q)
=
\left(
2^{m-1}
-
\sum_{i=1}^{s-2}\binom{m-1}{i}
\right)^n.
\]

If $1<p<s$, then
\[
\Fstar(m,n,Q)
=
\left(
2^m
-
\sum_{i=1}^{s-1}\binom{m}{i}
\right)^n.
\]
\end{theorem}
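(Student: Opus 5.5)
The plan is to reduce to a single column and then count admissible columns by weight. Since $Q$ has one column, every copy of $Q$ in $A$ lies in one column of $A$, exactly as in the proof of Lemma~\ref{lem:colchar}. So whether an entry is witnessed depends only on its own column. Hence $\Fstar(m,n,Q)=a^n$, where $a$ is the number of vectors $v\in\{0,1\}^m$ all of whose $1$'s are witnessed by a copy of $Q$ inside $v$. Such a copy is a choice of rows $r_1<\dots<r_s$ with $v_{r_p}=0$ and $v_{r_a}=1$ for $a\ne p$.

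\emph{Characterization step.} I would show that a nonzero $v$ is admissible if and only if it has a zero at some position $z$ with at least $p-1$ ones before $z$ and at least $s-p$ ones after $z$.

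Necessity is immediate, since any copy provides such a $z=r_p$. For sufficiency, fix such a $z$ and a $1$ at position $i\neq z$. If $i<z$, choose $p-1$ ones before $z$ that include $i$, and any $s-p$ ones after $z$. This is possible because the ones before $z$ number at least $p-1$ and $i$ is one of them. The case $i>z$ is symmetric. So a single good zero witnesses every $1$ simultaneously.

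\emph{Counting step.} Fix the weight $w$ of $v$. A good zero can exist only if $w\ge s-1$. Label the ones $1,\dots,w$ from top to bottom and view the $m-w$ zeros as distributed into the $w+1$ gaps around them. A zero is good exactly when it lies in one of the gaps strictly between one number $p-1$ and one number $w-s+p+1$; here one number $0$ means the top of the vector and one number $w+1$ means the bottom. These are gaps $p-1,\dots,w-s+p$, a range of $w-s+2$ consecutive gaps. Consequently $v$ fails if and only if all $m-w$ zeros fall in the remaining $s-1$ gaps, which gives $\binom{m-w+s-2}{s-2}$ failing vectors of weight $w$.

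The count of failing vectors is then the same for every $p$, so $p$ only enters through the boundary degeneracies, and these need separate care:
\begin{itemize}
\item For $1<p<s$, summing over $w$ by the hockey stick identity gives
\[
a=1+\sum_{w=s-1}^{m}\Bigl[\binom mw-\binom{m-w+s-2}{s-2}\Bigr]
=1+\sum_{w\ge s-1}\binom mw-\binom{m}{s-1}
=1+\sum_{w\ge s}\binom mw ,
\]
which is the second formula.
\item For $p=1$ the good-zero condition simplifies, and I would verify it directly. The ones above the first zero can never be witnessed, so a nonzero admissible $v$ must have $v_1=0$. Conversely, if $v_1=0$ and the remaining $m-1$ entries have weight $0$ or at least $s-1$, then $z=1$ is a good zero. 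Hence
\[
a=1+\sum_{k\ge s-1}\binom{m-1}{k}=2^{m-1}-\sum_{i=1}^{s-2}\binom{m-1}{i}.
\]
\item The case $p=s$ follows from $p=1$ by reversing the order of the rows. Reversal maps copies of $Q$ bijectively to copies of the reversed pattern, which has its zero in position $1$.
\end{itemize}

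The main obstacle is the interior case $1<p<s$. There the admissible set is not simply ``weight $0$ or at least $s$'': for example, the all-ones vector fails. So the gap-counting argument and the hockey stick identity are genuinely needed to recover the clean formula. The boundary indices of the forbidden gap range also need careful checking.
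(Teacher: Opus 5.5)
Your route is the paper's: columns are independent, $p=1$ is handled directly (first entry zero and at least $s-1$ further ones), and $p=s$ follows by reflection. The interior case uses the same good-gap criterion, the weak-composition count $\binom{m-w+s-2}{s-2}$, and the hockey-stick identity.

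One claim needs correcting. Your characterization step is asserted for every $p$, but it is false when $p=1$ or $p=s$. The sufficiency argument for a $1$ at $i<z$ must place $i$ among $p-1$ selected ones above $z$, which is impossible when $p=1$. Symmetrically, for $i>z$ it fails when $p=s$. For example, with $Q=(0,1)^{\mathsf T}$ the column $(1,0,1)^{\mathsf T}$ has a good zero in position $2$, yet its top $1$ is unwitnessed. The number of weight-$w$ vectors with no good zero is indeed $\binom{m-w+s-2}{s-2}$ for every $p$. However, at $p\in\{1,s\}$ these are not the only inadmissible vectors. Applying the gap count at $p=1$ would give $5$ instead of the correct $4$ admissible columns for $m=3$, $s=2$. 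So the good-zero condition does not ``simplify'' at the boundary; admissibility there is strictly stronger than having a good zero.

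Restrict the characterization to $1<p<s$, which is the only case where you need it in general. Your $p=1$ argument uses sufficiency only for the good zero $z=1$, which has no $1$'s above it, so it remains valid. With that restriction the proof is complete.
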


\begin{proof}
Since $Q$ is a column pattern, the columns of the ambient $m\times n$
matrix may be chosen independently. Thus it suffices to count the admissible
columns of length $m$.

First suppose $p=1$. A nonzero column is
strongly $Q$-forcing if and only if its first entry is $0$ and it contains
at least $s-1$ ones. Indeed, the topmost $1$ can only occupy the first
$1$-position of $Q$, so it needs a zero above it. Conversely, if the first entry is zero and
the column has at least $s-1$ ones, then any prescribed $1$ can be selected with $s-2$ other
$1$'s and the first entry to form a copy of $Q$. Hence the number of admissible columns is
\[
1+\sum_{k=s-1}^{m-1}\binom{m-1}{k}
=
2^{m-1}-\sum_{k=1}^{s-2}\binom{m-1}{k}.
\]
This gives the first formula.
If $p=s$, the same proof uses the bottommost $1$ and the last entry of the column in place of
the topmost $1$ and the first entry. The number of admissible columns is unchanged.

Now suppose $1<p<s$. Let $v\in\{0,1\}^m$ be a column with exactly $k$
ones, and write it in gap form
\[
g_0\,1\,g_1\,1\,g_2\,1\cdots 1\,g_{k-1}\,1\,g_k,
\]
where $g_i$ is the number of zeros after exactly $i$ ones. A zero in gap
$g_i$ has $i$ ones above it and $k-i$ ones below it. Hence it can serve
as the zero entry of a copy of $Q$ if and only if
\[
p-1\le i\le k-s+p.
\]
Moreover, the existence of one such zero is sufficient to witness \emph{every} $1$ in the
column. Indeed, a prescribed $1$ above the zero can be included among $p-1$ selected ones above
it, together with any $s-p$ ones below; a prescribed $1$ below the zero is handled symmetrically.
The two inequalities above guarantee enough ones on both sides in either case.
Conversely, a copy witnessing any $1$ contains a zero in one of these gaps. We call a gap $g_i$ \emph{good} if $p-1\le i\le k-s+p$. Thus a nonzero
column is admissible if and only if at least one good gap in the column contains a zero.

If $1\le k\le s-2$, then no such zero can exist, so all columns with $k$
ones are inadmissible. These contribute
\[
\sum_{k=1}^{s-2}\binom{m}{k}
\]
nonzero inadmissible columns.

Now assume $k\ge s-1$. The inadmissible columns are those for which every
zero lies outside the good gaps
\[
g_{p-1},g_p,\ldots,g_{k-s+p}.
\]
Thus all zeros must lie in the remaining gaps
\[
g_0,\ldots,g_{p-2}
\qquad\text{and}\qquad
g_{k-s+p+1},\ldots,g_k.
\]
There are
\[
(p-1)+(s-p)=s-1
\]
such gaps. Therefore, for fixed $k\ge s-1$, the number of inadmissible
columns with exactly $k$ ones is the number of weak compositions of
$m-k$ into $s-1$ parts, namely
\[
\binom{m-k+s-2}{s-2}.
\]
Hence the total number of nonzero inadmissible columns is
\[
\sum_{k=1}^{s-2}\binom{m}{k}
+
\sum_{k=s-1}^{m}\binom{m-k+s-2}{s-2}.
\]
By the hockey-stick identity,
\[
\sum_{k=s-1}^{m}\binom{m-k+s-2}{s-2}
=
\binom{m}{s-1}.
\]
Thus the number of nonzero inadmissible columns is
\[
\sum_{k=1}^{s-1}\binom{m}{k}.
\]
Since the zero column is admissible, the number of admissible columns is
\[
2^m-\sum_{k=1}^{s-1}\binom{m}{k}.
\]
This gives the second formula.
\end{proof}

\begin{corollary}\label{cor:boundaryzeros}
Let $Q$ be a nonzero $s\times1$ pattern whose zeros form two possibly empty
blocks at the top and bottom, with $k$ zeros in total. Then, for $m\ge s$ and $n\ge1$,
\[
\Fstar(m,n,Q)=\Fstar(m-k,n,J_{s-k,1}).
\]
Consequently,
\[
d_Q(m,n)=d_{J_{s-k,1}}(m-k,n)+kn.
\]
\end{corollary}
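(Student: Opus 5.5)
Let $Q$ have $a$ zeros on top and $b$ zeros at the bottom, with $a+b=k$; its middle block $J_{s-k,1}$ is nonempty because $Q\ne0$. As in the proof of Theorem~\ref{thm:onezero}, columns can be chosen independently, since every copy of a column pattern lies in a single column. So it suffices to show that the number of admissible columns $v\in\{0,1\}^m$ equals the number of admissible columns of length $m-k$ for $J_{s-k,1}$. By Lemma~\ref{lem:colchar}, the latter are exactly the vectors of weight $0$ or weight at least $s-k$.

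The first step is to characterize admissibility for $Q$. A column $v$ is admissible if and only if $v$ is zero, or $v$ satisfies two conditions: its first $a$ entries and its last $b$ entries are $0$, and it has at least $s-k$ ones.

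For necessity, take the topmost $1$ of $v$. Any witnessing copy places it at some $1$-position of $Q$, and all such positions have $a$ zero positions above them. So at least $a$ zeros lie above the topmost $1$. Symmetrically, at least $b$ zeros lie below the bottommost $1$. Hence all ones lie in rows $a+1,\dots,m-b$, and any copy uses $s-k$ ones, so $v$ has at least $s-k$ ones.

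For sufficiency, let $(i)$ be a prescribed $1$. Choose rows $1,\dots,a$ as the top zeros and rows $m-b+1,\dots,m$ as the bottom zeros. Then choose $s-k$ ones that include row $i$; all of them lie strictly between the two zero blocks. Listed in increasing order, these rows form a copy of $Q$ witnessing row $i$.

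The second step is the bijection. Deleting the forced-zero rows $1,\dots,a$ and $m-b+1,\dots,m$ maps the admissible nonzero columns bijectively onto the vectors of length $m-k$ with at least $s-k$ ones. Adding the zero column on both sides gives equality of the per-column counts. Raising to the $n$th power yields $\Fstar(m,n,Q)=\Fstar(m-k,n,J_{s-k,1})$; note $m-k\ge s-k$, so the right side is defined.

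The deficiency identity then follows from $d_Q(m,n)=mn-\log_2\Fstar$:
\[
d_Q(m,n)=mn-\log_2\Fstar(m-k,n,J_{s-k,1})=d_{J_{s-k,1}}(m-k,n)+kn.
\]
The main obstacle is the necessity argument in the first step, namely that the zero blocks are forced at the extreme rows of $v$ and not merely somewhere above or below. The topmost/bottommost-one argument handles this directly.
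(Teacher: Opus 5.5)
Your proposal is correct and follows essentially the same route as the paper. Both write $Q=0^a1^{s-k}0^b$ and characterize admissible columns via the topmost and bottommost ones: the first $a$ and last $b$ entries must be zero, and there must be at least $s-k$ ones. Both then delete the $k$ forced-zero entries to get a bijection with admissible columns for $J_{s-k,1}$, and conclude by column independence.
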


\begin{proof}
Write $Q=0^a1^{s-k}0^b$, where $a,b\ge0$ and $a+b=k$. A nonzero column
$v\in\{0,1\}^m$ is strongly $Q$-forcing if and only if its first $a$ and last $b$ entries are
zero and its remaining $m-k$ entries contain at least $s-k$ ones. For necessity, apply a
witnessing copy to the topmost and bottommost $1$ of $v$. The topmost $1$ must occupy the first
$1$-position of $Q$ and therefore needs $a$ zeros above it. Similarly, the bottommost $1$
needs $b$ zeros below it, and any copy supplies $s-k$ ones. Conversely, under
the stated conditions, any prescribed $1$ can be included among $s-k$ selected ones, together
with the first $a$ and last $b$ entries as the required zeros. The zero column is admissible as
well.

Deleting the forced top $a$ and bottom $b$ entries is therefore a bijection between admissible
length-$m$ columns for $Q$ and admissible length-$(m-k)$ columns for $J_{s-k,1}$. Since columns
are independent, the first identity follows. Finally,
\[
d_Q(m,n)=mn-\log_2\Fstar(m,n,Q)
=kn+d_{J_{s-k,1}}(m-k,n).
\]
\end{proof}

\subsection{Singleton patterns: exact extremizers}

\begin{definition}\label{def:corner}
For $(p,q)\in[s]\times[t]$, let $E_{s,t}^{p,q}$ be the $s\times t$ \emph{singleton pattern}
whose sole $1$ is in position $(p,q)$. Write $E_{s,t}:=E_{s,t}^{1,1}$ for the top-left
singleton pattern.
\end{definition}

\begin{lemma}\label{lem:cornerchar}
Let $m\ge s$ and $n\ge t$, put $H=m-s+1$ and $W=n-t+1$, and let $A$ be an $m\times n$
$(0,1)$-matrix. Then $A$ is strongly $E_{s,t}^{p,q}$-forcing if and only if its support is
contained in
\[
I_p\times J_q,\qquad
I_p:=\{p,\dots,p+H-1\},\qquad J_q:=\{q,\dots,q+W-1\}.
\]
\end{lemma}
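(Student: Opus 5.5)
Two directions. Write $Q=E_{s,t}^{p,q}$; a copy of $Q$ in $A$ is a choice of rows $r_1<\dots<r_s$ and columns $c_1<\dots<c_t$ such that $A[r_p,c_q]=1$ and every other selected cell is $0$. The only $1$-position of $Q$ is $(p,q)$, so a $1$-entry $(i,j)$ is witnessed exactly when some copy has $r_p=i$ and $c_q=j$.

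\emph{Necessity.} Suppose $(i,j)$ is witnessed. Then $r_1<\dots<r_{p-1}<i$ are $p-1$ distinct rows below $i$ in index, so $i\ge p$. Likewise $r_{p+1},\dots,r_s$ are $s-p$ rows above $i$, so $i\le m-(s-p)=p+H-1$. Hence $i\in I_p$. The same argument on columns gives $j\in J_q$. So a strongly forcing matrix has support inside $I_p\times J_q$. The zero conditions of the copy are not needed here.

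\emph{Sufficiency.} Suppose the support of $A$ lies in $I_p\times J_q$, and let $A[i,j]=1$. I would reuse the construction of Section~\ref{sec:construction} with anchor $(p,q)$. There $\Phi^{\mathrm r}=I_p$, $\Phi^{\mathrm c}=J_q$, and the scaffold rows and columns are $T^{\mathrm r}\cup B^{\mathrm r}=[m]\setminus I_p$ and $L^{\mathrm c}\cup R^{\mathrm c}=[n]\setminus J_q$. Take the rows $R(i)$ and columns $C(j)$. By Lemma~\ref{lem:order}, $i$ sits at row position $p$ and $j$ at column position $q$, and every other selected row or column is a scaffold row or column, i.e.\ lies outside $I_p$ or $J_q$. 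So every selected cell other than $(i,j)$ has its row outside $I_p$ or its column outside $J_q$, and is $0$ by the support hypothesis. The cell $(i,j)$ itself is $1$. Thus $(R(i),C(j))$ is a copy of $Q$ witnessing $(i,j)$, and $A$ is strongly forcing.

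The step needing the most care is the sufficiency claim that all non-anchor cells of the canonical copy are $0$, since this is what the support hypothesis must supply. The key point is that every such cell lies in a scaffold row or a scaffold column, i.e.\ outside $I_p\times J_q$; this is exactly what Lemma~\ref{lem:order} gives. One could instead appeal to Lemma~\ref{lem:canon} directly, noting that for a singleton pattern $A(B)$ vanishes outside the free block, so any $A$ supported in $I_p\times J_q$ equals $A(B)$ for its restriction $B$. The empty-interval conventions cover the edge cases $p=1$, $p=s$ and the corresponding column cases automatically.
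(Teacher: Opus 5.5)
Your proposal is correct and follows the paper's proof essentially step for step. Necessity counts the $p-1$ and $s-p$ selected rows on either side of $i$, and likewise for columns. Sufficiency uses the rows $1,\dots,p-1,i,p+H,\dots,m$ and columns $1,\dots,q-1,j,q+W,\dots,n$, which are exactly your $R(i)$ and $C(j)$, and every non-anchor cell of that submatrix lies outside $I_p\times J_q$. The only difference is cosmetic: you cite Lemma~\ref{lem:order} for the positions of $i$ and $j$, where the paper simply reads them off the explicit sequences.
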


\begin{proof}
If a $1$-entry $(i,j)$ is witnessed, it occupies position $(p,q)$ in a copy. The copy needs
$p-1$ rows before $i$ and $s-p$ rows after it, so
$p\le i\le m-s+p=p+H-1$. Similarly, $q\le j\le q+W-1$. This proves necessity.

Conversely, suppose the support of $A$ lies in $I_p\times J_q$, and let $(i,j)$ be a
$1$-entry. Choose the rows
\[
1,\dots,p-1,\ i,\ p+H,\dots,m
\]
and the columns
\[
1,\dots,q-1,\ j,\ q+W,\dots,n.
\]
They are strictly increasing and place $(i,j)$ in role $(p,q)$. Every other selected cell has
either a row outside $I_p$ or a column outside $J_q$, and hence is zero. The selected submatrix
is therefore exactly $E_{s,t}^{p,q}$ and witnesses $(i,j)$. The all-zero matrix is covered
vacuously.
\end{proof}

\begin{theorem}\label{thm:corner}
Let $(p,q)\in[s]\times[t]$, $m\ge s$, and $n\ge t$. Put $H=m-s+1$ and
$W=n-t+1$. Then
\[
\Fstar(m,n,E_{s,t}^{p,q})=2^{HW}.
\]
Consequently,
\[
\min_{\substack{Q\in\{0,1\}^{s\times t}\\Q\ne0}}\Fstar(m,n,Q)=2^{HW},
\qquad
\max_{\substack{Q\in\{0,1\}^{s\times t}\\Q\ne0}}d_Q(m,n)=mn-HW.
\]
\end{theorem}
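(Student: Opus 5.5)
The plan is to read the count directly off Lemma~\ref{lem:cornerchar}. That lemma shows that an $m\times n$ matrix $A$ is strongly $E_{s,t}^{p,q}$-forcing exactly when its support lies in $I_p\times J_q$. These sets have $|I_p|=H$ and $|J_q|=W$; note that $p+H-1=m-s+p\le m$ and $q+W-1\le n$, so both index sets are genuine subsets of $[m]$ and $[n]$. The strongly forcing matrices are therefore in bijection with arbitrary $(0,1)$-fillings of the $H\times W$ rectangle $I_p\times J_q$, with every other entry equal to $0$. This gives $\Fstar(m,n,E_{s,t}^{p,q})=2^{HW}$ with no further work.

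For the consequence, combine this with Theorem~\ref{thm:main}. That theorem gives $\Fstar(m,n,Q)\ge 2^{HW}$ for every nonzero $s\times t$ pattern $Q$, so the minimum over such patterns is at least $2^{HW}$. The first part shows the minimum is attained, for instance by $E_{s,t}=E_{s,t}^{1,1}$, which exists because $s,t\ge1$. Hence the minimum equals $2^{HW}$.

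The statement about $d_Q$ follows because $d_Q(m,n)=mn-\log_2\Fstar(m,n,Q)$ is a decreasing function of $\Fstar$. So the maximum of $d_Q$ over nonzero patterns equals $mn$ minus the base-$2$ logarithm of the minimum count, which is $mn-HW$.

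No step is a real obstacle, since the characterization lemma carries all the content. The only point to check is that the index intervals have the right sizes and lie inside the ambient ranges, and this holds because $m\ge s$ and $n\ge t$ give $H,W\ge1$.
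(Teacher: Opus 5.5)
Your proposal is correct and matches the paper's proof essentially step for step. Lemma~\ref{lem:cornerchar} gives the bijection with arbitrary fillings of the $H\times W$ rectangle $I_p\times J_q$, Theorem~\ref{thm:main} supplies the matching lower bound for every nonzero pattern, and the claim about $d_Q$ follows because $d_Q$ decreases as $\Fstar$ increases.
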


\begin{proof}
Lemma~\ref{lem:cornerchar} gives a bijection between strongly
$E_{s,t}^{p,q}$-forcing matrices and arbitrary $(0,1)$-fillings of the $H\times W$ rectangle
$I_p\times J_q$, proving the first formula. Theorem~\ref{thm:main} gives the same quantity as a
lower bound for every nonzero $s\times t$ pattern, so the first extremal identity follows.
Applying $d_Q(m,n)=mn-\log_2\Fstar(m,n,Q)$ gives the second.
\end{proof}

\subsection{Full-height all-ones patterns}

\begin{theorem}\label{thm:fullheight}
For all $m\ge1$ and $n\ge t\ge1$,
\[
\Fstar(m,n,J_{m,t})
=1+\sum_{k=t}^{n}\binom nk
=2^n-\sum_{k=1}^{t-1}\binom nk,
\]
and in particular $\Fstar(m,n,J_{m,1})=2^n$ and
$\Fstar(m,n,J_{m,2})=2^n-n$.
\end{theorem}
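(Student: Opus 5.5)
Because the pattern $J_{m,t}$ has full height $m$, every copy of it in an $m\times n$ matrix $A$ must use all $m$ rows. So a copy is simply a choice of $t$ columns $c_1<\dots<c_t$ that are all-ones columns of $A$. The plan is to characterize strongly $J_{m,t}$-forcing matrices column by column, in the spirit of Lemma~\ref{lem:colchar}, and then count.

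\emph{Characterization.} $A$ is strongly $J_{m,t}$-forcing if and only if every column of $A$ is either all-zero or all-one, and either no column is all-one or at least $t$ columns are all-one.

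For necessity, take a $1$-entry $(i,j)$ and a copy witnessing it. The copy uses every row and includes column $j$. Since every cell of the copy is $1$, column $j$ is entirely $1$. The copy also supplies $t$ all-one columns. Hence any column containing a $1$ is all-one, and if any such column exists there are at least $t$ of them.

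For sufficiency, suppose there are $k\ge t$ all-one columns and $(i,j)$ is a $1$-entry. Then column $j$ is all-one. Choose $j$ together with $t-1$ further all-one columns and list them in increasing order. With all $m$ rows, this gives a copy of $J_{m,t}$ that contains $(i,j)$ at a $1$-position. The all-zero matrix is strongly forcing vacuously.

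\emph{Counting.} By the characterization, a strongly forcing matrix is determined by the set $S\subseteq[n]$ of its all-one columns, subject to $|S|=0$ or $|S|\ge t$. This gives
\[
1+\sum_{k=t}^{n}\binom nk .
\]
The binomial theorem turns this into $2^n-\sum_{k=1}^{t-1}\binom nk$. For $t=1$ the correction sum is empty, so the count is $2^n$. For $t=2$ the correction is $\binom n1$, so the count is $2^n-n$.

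There is no serious obstacle. The only point needing care is the first step of necessity: the full height of the pattern forces the witnessing copy to occupy the entire column $j$, and this is what rules out partially filled columns.
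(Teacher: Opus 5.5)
Your proof is correct and takes the same approach as the paper. A copy of $J_{m,t}$ must occupy all $m$ rows, so the strongly forcing matrices are exactly the zero matrix and the matrices with $k\ge t$ all-one columns and all other columns zero, which gives $1+\sum_{k=t}^{n}\binom nk$; you simply spell out the necessity and sufficiency directions that the paper compresses into one line.
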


\begin{proof}
A copy of $J_{m,t}$ uses all $m$ rows and $t$ all-one columns. Hence a $1$-entry is witnessed
if and only if its column is all-one and belongs to a set of at least $t$ all-one columns.
It follows that a strongly $J_{m,t}$-forcing matrix is either the all-zero matrix, or consists
of $k\ge t$ all-one columns with every other column zero. Choosing the $k$ columns and summing
over $k$ gives $1+\sum_{k=t}^n\binom nk$. The second form follows from the binomial theorem.
\end{proof}

\section{Counting by the number of ones}\label{sec:weight}

We now refine the count $\Fstar(m,n,Q)$ according to the number of $1$-entries. For an integer
$w\ge0$ let $\Fstar(m,n,Q,w)$ denote the number of $m\times n$ strongly $Q$-forcing matrices $A$
with $|A|_1=w$, so that $\Fstar(m,n,Q)=\sum_{w\ge0}\Fstar(m,n,Q,w)$. Write $o(Q):=st-z(Q)$ for the
number of $1$-entries of $Q$, and for an anchor $(p,q)$ set
\[
\mu_p:=\sum_{b=1}^{t}Q[p,b],\qquad \nu_q:=\sum_{a=1}^{s}Q[a,q]
\]
(the number of $1$'s in row $p$ and in column $q$ of $Q$). Finally let
\[
h(\alpha):=-\alpha\log_2\alpha-(1-\alpha)\log_2(1-\alpha)\quad(0<\alpha<1),\qquad h(0):=h(1):=0
\]
denote the binary entropy function. Theorem~\ref{thm:minweight} gives the exact count at
$w=o(Q)$, and Theorem~\ref{thm:entropy} gives the exponential growth rate when $w/(mn)$ tends
to a constant in $(0,1)$.

\begin{lemma}\label{lem:weightAB}
Let $m\ge s$ and $n\ge t$. Fix an anchor $(p,q)$ and form $A(B)$ as in
Section~\ref{sec:construction}. Put $H=m-s+1$ and $W=n-t+1$. Then for every free block $B$,
\[
|A(B)|_1=|B|_1+c_0,\qquad
c_0:=H(\mu_p-1)+W(\nu_q-1)+\bigl(o(Q)-\mu_p-\nu_q+1\bigr).
\]
Each summand is nonnegative, so $c_0\ge0$; if $(p,q)$ is the unique $1$-entry in its row and in its
column then $c_0=o(Q)-1$; and for fixed $Q$, $c_0\le(t-1)H+(s-1)W+o(Q)=O(m+n)$.
\end{lemma}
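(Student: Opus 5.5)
The count is a direct partition of the $1$-entries of $A(B)$ according to the four position types of \eqref{eq:def}. We tally each type separately and then check the three stated consequences.

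\emph{Free cells.} These contribute exactly $|B|_1$.

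\emph{Scaffold columns in free rows.} A free row $r$ and scaffold column $c$ carry $Q[p,\sigma(c)]$. Since $\sigma$ is a bijection from the scaffold columns onto $[t]\setminus\{q\}$, each free row contributes $\sum_{b\ne q}Q[p,b]=\mu_p-1$, using $Q[p,q]=1$. There are $H$ free rows, so this region contributes $H(\mu_p-1)$.

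\emph{Scaffold rows in free columns.} By the same argument with $\rho$, each free column contributes $\nu_q-1$, so this region contributes $W(\nu_q-1)$.

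\emph{Scaffold corner.} Since $(\rho,\sigma)$ is a bijection onto $([s]\setminus\{p\})\times([t]\setminus\{q\})$, the corner contributes the number of $1$'s of $Q$ outside row $p$ and column $q$. By inclusion--exclusion this is $o(Q)-\mu_p-\nu_q+Q[p,q]=o(Q)-\mu_p-\nu_q+1$.

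Summing the four regions gives $|A(B)|_1=|B|_1+c_0$.

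For the remaining claims, the first two summands of $c_0$ are nonnegative because $\mu_p,\nu_q\ge1$. The third is nonnegative because it counts the entries just described. If $(p,q)$ is alone in its row and column, then $\mu_p=\nu_q=1$, which kills the first two summands and leaves $c_0=o(Q)-1$. For the upper bound, $\mu_p-1\le t-1$, $\nu_q-1\le s-1$, and the third summand is at most $o(Q)$, which gives $c_0\le (t-1)H+(s-1)W+o(Q)=O(m+n)$ for fixed $Q$.

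No step is a real obstacle. The only point requiring care is using the bijectivity of $\rho$ and $\sigma$ established in Section~\ref{sec:construction}, so that each $1$ of $Q$ is counted exactly once per free row or column in the arms and exactly once in the corner.
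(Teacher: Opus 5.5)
Your proposal is correct and takes essentially the same route as the paper. Both partition the $1$-entries of $A(B)$ by the four position types of \eqref{eq:def}, use the bijectivity of $\rho$ and $\sigma$ to count the arm and corner ones (with inclusion--exclusion for the corner), and then read off nonnegativity, the isolated-anchor value $c_0=o(Q)-1$, and the $O(m+n)$ upper bound.
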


\begin{proof}
By \eqref{eq:def} the cells of $A(B)$ split into the four types listed there.

The free cells $\Phi^{\mathrm r}\times\Phi^{\mathrm c}$ carry the bits of $B$, contributing
$|B|_1$ ones.

For a free row $r\in\Phi^{\mathrm r}$ and a scaffold column $c\notin\Phi^{\mathrm c}$,
$A[r,c]=Q[p,\sigma(c)]$ depends only on $\sigma(c)$. As $c$ ranges over the $t-1$ scaffold columns,
$\sigma(c)$ ranges bijectively over $[t]\setminus\{q\}$ (Section~\ref{sec:construction}), so each fixed free row carries
exactly $|\{b\ne q:Q[p,b]=1\}|=\mu_p-1$ ones in scaffold columns. Over the $H$ free rows this is
$H(\mu_p-1)$ ones. Symmetrically, the scaffold-row/free-column cells contribute $W(\nu_q-1)$.

For scaffold cells ($r\notin\Phi^{\mathrm r}$, $c\notin\Phi^{\mathrm c}$),
$A[r,c]=Q[\rho(r),\sigma(c)]$, and $(\rho(r),\sigma(c))$ runs bijectively over
$([s]\setminus\{p\})\times([t]\setminus\{q\})$ (Section~\ref{sec:construction}). The number of $1$'s of $Q$ in these
positions is, by inclusion--exclusion, $o(Q)-\mu_p-\nu_q+1$: from the $o(Q)$ ones of $Q$ remove
those in row $p$ ($\mu_p$ of them) and those in column $q$ ($\nu_q$ of them), then restore the entry
$(p,q)$, removed twice, which is a $1$. So the scaffold cells contribute $o(Q)-\mu_p-\nu_q+1$ ones.

Summing the four contributions gives $|A(B)|_1=|B|_1+c_0$. Each summand of $c_0$ is nonnegative:
$\mu_p,\nu_q\ge1$ because $Q[p,q]=1$, and $o(Q)-\mu_p-\nu_q+1$ counts the $1$'s of $Q$ outside row
$p$ and column $q$. If $(p,q)$ is the unique $1$ in its row and column then $\mu_p=\nu_q=1$, whence
$c_0=o(Q)-1$. Finally $\mu_p\le t$ and $\nu_q\le s$, so $c_0\le(t-1)H+(s-1)W+o(Q)$, which is
$O(m+n)$ for fixed $Q$.
\end{proof}

\begin{lemma}\label{lem:Nk}
Let $H,W$ be positive integers, and let $N_k(H,W)$ be the number of matrices
$B\in\{0,1\}^{H\times W}$ with $|B|_1=k$ that have no zero row and no zero column. Then for
every integer $0\le k\le HW$,
\[
N_k(H,W)\;\ge\;\binom{HW}{k}-H\binom{(H-1)W}{k}-W\binom{H(W-1)}{k},
\]
with the convention $\binom{a}{k}=0$ when $k>a$.
\end{lemma}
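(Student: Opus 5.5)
The plan is a union bound over the ``bad'' events. Fix $k$ and let $\mathcal U$ be the set of all $B\in\{0,1\}^{H\times W}$ with $|B|_1=k$, so $|\mathcal U|=\binom{HW}{k}$. For each row index $i\in[H]$ let $\mathcal E_i\subseteq\mathcal U$ consist of those $B$ whose $i$-th row is zero, and for each column index $j\in[W]$ let $\mathcal F_j\subseteq\mathcal U$ consist of those $B$ whose $j$-th column is zero. A matrix in $\mathcal U$ is counted by $N_k(H,W)$ exactly when it lies in none of these $H+W$ sets. Hence
\[
N_k(H,W)=\Bigl|\mathcal U\setminus\Bigl(\bigcup_{i}\mathcal E_i\cup\bigcup_j\mathcal F_j\Bigr)\Bigr|\ \ge\ |\mathcal U|-\sum_{i=1}^H|\mathcal E_i|-\sum_{j=1}^W|\mathcal F_j|.
\]
This is the first Bonferroni inequality, i.e.\ subadditivity of cardinality for a union.

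Next we compute the individual terms. A matrix in $\mathcal E_i$ is a choice of $k$ one-positions among the $(H-1)W$ cells outside row $i$. So $|\mathcal E_i|=\binom{(H-1)W}{k}$, which is $0$ when $k>(H-1)W$, matching the stated convention. Likewise $|\mathcal F_j|=\binom{H(W-1)}{k}$. Substituting gives the claimed inequality for every $0\le k\le HW$.

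The only points needing care are edge cases. When $k=0$ the right-hand side is $1-H-W<0\le N_0$ (with $N_0=0$), so the bound holds. When $H=1$ or $W=1$ some binomials have top argument $0$; these are still handled correctly by the convention together with $\binom{0}{0}=1$. There is no real obstacle here. The main thing to verify is that the union bound is applied to subsets of $\mathcal U$, so that all counts are taken at fixed weight $k$.
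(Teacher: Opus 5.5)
Your proposal is correct and follows essentially the same route as the paper: a union bound over the $H$ zero-row events and $W$ zero-column events at fixed weight $k$, with each event counted by $\binom{(H-1)W}{k}$ or $\binom{H(W-1)}{k}$. Your explicit check of the edge cases $k=0$ and $H=1$ or $W=1$ is a small addition the paper leaves implicit.
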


\begin{proof}
There are $\binom{HW}{k}$ matrices in $\{0,1\}^{H\times W}$ with exactly $k$ ones. Such a matrix
fails the condition iff some row or some column is entirely zero. If a prescribed row is zero, the
$k$ ones lie among the remaining $(H-1)W$ cells, so there are at most $\binom{(H-1)W}{k}$ such
matrices for each of the $H$ rows; likewise at most $\binom{H(W-1)}{k}$ for each of the $W$ columns.
By the union bound the number of failing matrices is at most
$H\binom{(H-1)W}{k}+W\binom{H(W-1)}{k}$, and subtracting from $\binom{HW}{k}$ gives the claim.
\end{proof}

\begin{lemma}\label{lem:entropy}
For every positive integer $N$ and every integer $0\le k\le N$ one has
$\binom{N}{k}\le 2^{N h(k/N)}$, and for $1\le k\le N-1$,
$\binom{N}{k}\ge \tfrac{1}{N+1}\,2^{N h(k/N)}$. Consequently, if $N\to\infty$ and $k=k(N)$ satisfies
$k/N\to\beta\in(0,1)$, then $N^{-1}\log_2\binom{N}{k}\to h(\beta)$.
\end{lemma}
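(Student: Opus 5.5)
The upper bound comes from viewing $\binom{N}{k}$ as one term of a binomial expansion. The lower bound comes from comparing $\binom{N}{k}$ with the other terms of that expansion, and the limit then follows from these two bounds.

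\emph{Upper bound.} Put $\beta=k/N$. The endpoint cases $k=0$ and $k=N$ are trivial, since both sides equal $1$ under the convention $h(0)=h(1)=0$. For $0<\beta<1$, expand
\[
1=(\beta+(1-\beta))^N=\sum_{j=0}^N\binom Nj\beta^j(1-\beta)^{N-j}.
\]
Keeping only the term $j=k$ gives $\binom Nk\beta^k(1-\beta)^{N-k}\le 1$. Taking $\log_2$, the identity $\log_2\bigl(\beta^k(1-\beta)^{N-k}\bigr)=-Nh(\beta)$ yields $\binom Nk\le 2^{Nh(\beta)}$.

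\emph{Lower bound.} Write $T_j:=\binom Nj\beta^j(1-\beta)^{N-j}$; I will show that $j=k$ maximizes $T_j$. The ratio of consecutive terms is
\[
\frac{T_{j+1}}{T_j}=\frac{N-j}{j+1}\cdot\frac{\beta}{1-\beta}.
\]
This ratio is at least $1$ exactly when $j\le N\beta-(1-\beta)$, that is, when $j+1\le k+\beta$. For integer $j$ this means $j+1\le k$, so $T_j$ increases up to $j=k$ and decreases afterwards. Hence $T_k$ is the largest of the $N+1$ terms, and since the terms sum to $1$, we get $T_k\ge 1/(N+1)$. Since $T_k=\binom Nk 2^{-Nh(\beta)}$, this is exactly $\binom Nk\ge 2^{Nh(k/N)}/(N+1)$.

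\emph{Limit.} Take $\log_2$ of the two bounds and divide by $N$:
\[
h(k/N)-\frac{\log_2(N+1)}{N}\;\le\;\frac1N\log_2\binom Nk\;\le\;h(k/N).
\]
Because $\beta\in(0,1)$ and $k/N\to\beta$, eventually $1\le k\le N-1$, so the lower bound applies. The function $h$ is continuous on $[0,1]$, so $h(k/N)\to h(\beta)$, and $\log_2(N+1)/N\to0$. Squeezing gives the limit $h(\beta)$.

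The only step needing care is the mode argument in the lower bound. The threshold $j+1\le k+\beta$ with $0<\beta<1$ must be read carefully so that the maximizing index is exactly $k$ and not $k\pm1$.
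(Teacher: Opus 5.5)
Your proposal is correct and follows essentially the same route as the paper. The upper bound isolates the $j=k$ term of $(\beta+(1-\beta))^N$. The lower bound uses the consecutive-ratio test, with threshold $(N+1)\beta=k+\beta$, to show that this term is the largest of the $N+1$ terms summing to $1$, and the limit then follows by the same squeeze.
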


\begin{proof}
For $k\in\{0,N\}$ the upper bound reads $\binom Nk=1\le2^{N\cdot0}=1$. For $1\le k\le N-1$ put
$\beta:=k/N\in(0,1)$. Since $\beta^k(1-\beta)^{N-k}=2^{k\log_2\beta+(N-k)\log_2(1-\beta)}
=2^{-Nh(\beta)}$, the binomial theorem gives
\[
1=\bigl(\beta+(1-\beta)\bigr)^N=\sum_{j=0}^N\binom Nj\beta^j(1-\beta)^{N-j}
\ \ge\ \binom Nk\beta^k(1-\beta)^{N-k}=\binom Nk\,2^{-Nh(\beta)},
\]
so $\binom Nk\le2^{Nh(\beta)}$. For the lower bound consider the terms
$b_j:=\binom Nj\beta^j(1-\beta)^{N-j}$ ($0\le j\le N$), which are nonnegative and sum to $1$. For
$1\le j\le N$,
\[
\frac{b_j}{b_{j-1}}=\frac{(N-j+1)\beta}{j(1-\beta)}\ \ge\ 1
\iff j\le(N+1)\beta=k+\beta .
\]
As $0<\beta<1$, this holds for $j\le k$ and fails for $j\ge k+1$; hence $b_0\le\dots\le b_k$ and
$b_k\ge b_{k+1}\ge\dots\ge b_N$, so $b_k=\max_j b_j$. Since the $N+1$ terms sum to $1$,
$b_k\ge\frac1{N+1}$, i.e.\ $\binom Nk=b_k\,2^{Nh(\beta)}\ge\frac1{N+1}2^{Nh(\beta)}$. Taking $\log_2$
and dividing by $N$,
\[
h(k/N)+\frac1N\log_2\frac1{N+1}\ \le\ \frac1N\log_2\binom Nk\ \le\ h(k/N).
\]
If $k/N\to\beta\in(0,1)$ then $h(k/N)\to h(\beta)$ by continuity of $h$ on $(0,1)$, and
$\frac1N\log_2\frac1{N+1}\to0$; the squeeze gives $N^{-1}\log_2\binom Nk\to h(\beta)$.
\end{proof}

\begin{theorem}\label{thm:minweight}
Let $Q$ be any nonzero $s\times t$ pattern, and let $r(Q)$ and $c(Q)$ be its numbers of
nonzero rows and nonzero columns. Then for all $m\ge s$, $n\ge t$,
\[
\Fstar(m,n,Q,w)=
\begin{cases}
1, & w=0,\\[2pt]
0, & 1\le w\le o(Q)-1,\\[2pt]
\dbinom{H+r(Q)-1}{r(Q)}\dbinom{W+c(Q)-1}{c(Q)}, & w=o(Q),
\end{cases}
\]
where $H=m-s+1$ and $W=n-t+1$. In particular, if $Q$ has no zero row or column, the last
quantity is $\binom ms\binom nt$.
\end{theorem}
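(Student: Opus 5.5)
The plan is to treat the three weight ranges separately. Everything rests on one observation: a nonzero strongly $Q$-forcing matrix contains at least one copy of $Q$, and that copy already accounts for $o(Q)$ distinct $1$-entries.

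\emph{Weights below $o(Q)$.} For $w=0$ the only matrix is the all-zero matrix, which is strongly $Q$-forcing vacuously, so the count is $1$. If $A$ is strongly $Q$-forcing with $|A|_1\ge1$, then any $1$-entry is witnessed. Hence $A$ contains a copy $(r_\bullet,c_\bullet)$ of $Q$. The cells $(r_a,c_b)$ with $Q[a,b]=1$ are pairwise distinct because the index sequences are strictly increasing, so $|A|_1\ge o(Q)$. This gives $\Fstar(m,n,Q,w)=0$ for $1\le w\le o(Q)-1$.

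\emph{Structure at weight $o(Q)$.} Now let $|A|_1=o(Q)$ with $A$ strongly $Q$-forcing, and fix a copy $(r_\bullet,c_\bullet)$. By the weight count, the support of $A$ is exactly $\{(r_a,c_b):Q[a,b]=1\}$. Let $a_1<\dots<a_r$ be the nonzero rows of $Q$ and $b_1<\dots<b_c$ its nonzero columns, where $r=r(Q)$ and $c=c(Q)$. Put $x_k:=r_{a_k}$ and $y_\ell:=c_{b_\ell}$. Then the set of rows meeting the support is exactly $\{x_1<\dots<x_r\}$, since every nonzero row of $Q$ has a $1$; similarly the columns meeting the support are $\{y_1<\dots<y_c\}$. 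Moreover $A[x_k,y_\ell]=Q[a_k,b_\ell]$.

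Consequently $A$ is determined by the pair $(x,y)$, and conversely $(x,y)$ is recovered from $A$ as the sorted lists of support rows and columns. The map $A\mapsto(x,y)$ is therefore injective, independently of which copy was chosen. Because $x$ extends to the full sequence $r_\bullet$, it satisfies
\[
x_1\ge a_1,\qquad x_k-x_{k-1}\ge a_k-a_{k-1}\ (2\le k\le r),\qquad m-x_r\ge s-a_r .
\]
The analogous gap conditions hold for $y$ with $n,t,b_\bullet$.

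\emph{Converse and counting.} Conversely, take any $x$ satisfying these gap conditions and any analogous $y$, and define $A$ to have support $\{(x_k,y_\ell):Q[a_k,b_\ell]=1\}$. The gap conditions allow us to insert rows for the zero rows of $Q$ into the gaps, giving a strictly increasing $r_\bullet$ with $r_{a_k}=x_k$; we build $c_\bullet$ likewise. I then check that $(r_\bullet,c_\bullet)$ is a copy of $Q$:
\begin{itemize}
\item cells in an inserted (zero-of-$Q$) row or column lie outside the support rows or columns, so they are $0$;
\item the cells $(x_k,y_\ell)$ equal $Q[a_k,b_\ell]$ by construction.
\end{itemize}
This single copy witnesses every $1$-entry, and $|A|_1=o(Q)$.

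It remains to count the admissible $x$. Setting $g_0=x_1-a_1$, $g_k=x_k-x_{k-1}-(a_k-a_{k-1})$ and $g_r=(m-x_r)-(s-a_r)$ gives nonnegative integers $g_0,\dots,g_r$ with $\sum_k g_k=m-s=H-1$. These are weak compositions of $H-1$ into $r+1$ parts, numbering $\binom{H-1+r}{r}$. Doing the same for columns and multiplying gives the stated formula. When $Q$ has no zero row or column, $r=s$ and $c=t$, so the count is $\binom{m}{s}\binom{n}{t}$.

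\emph{Main obstacle.} The delicate step is the bijection. One must make sure that the encoding $(x,y)$ is independent of the chosen copy, which holds because $(x,y)$ is read off from the support alone. One must also check that the zero rows and columns of $Q$ impose only the gap inequalities and no further constraint. That second check is exactly where the minimum-weight hypothesis enters: $A$ vanishes everywhere outside the support rows and columns, so the inserted rows and columns automatically reproduce the zeros of $Q$.
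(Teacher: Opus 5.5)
Your proposal is correct and follows essentially the same route as the paper: a witnessing copy accounts for $o(Q)$ distinct ones, a weight-$o(Q)$ matrix is encoded by its support rows $x_1<\dots<x_r$ and support columns $y_1<\dots<y_c$, and a pair is feasible exactly when it extends to a full embedding. The only difference is cosmetic: you count feasible $x$ as weak compositions of $H-1$ into $r+1$ gaps, whereas the paper counts the equivalent weakly increasing tuples $0\le x_1-a_1\le\dots\le x_r-a_r\le H-1$; note also that your interior gaps should be indexed $g_{k-1}$ for $2\le k\le r$ so they do not collide with the final gap $g_r$.
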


\begin{proof}
\emph{The case $w=0$.} The all-zero matrix is the unique $m\times n$ matrix of weight $0$, and it is
strongly $Q$-forcing vacuously; so $\Fstar(m,n,Q,0)=1$.

\emph{The case $1\le w\le o(Q)-1$.} Let $A$ be strongly $Q$-forcing with $|A|_1=w\ge1$. Choose a
$1$-entry of $A$; being witnessed, it lies in a copy $(r_\bullet,c_\bullet)$ of $Q$. The copy
reproduces $Q$, so the $o(Q)$ positions $(r_a,c_b)$ with $Q[a,b]=1$ are $1$-entries of $A$, and they
are distinct because $(a,b)\mapsto(r_a,c_b)$ is injective (the $r_a$, and the $c_b$, are strictly
increasing). Hence $|A|_1\ge o(Q)>w$, a contradiction; so no such $A$ exists and
$\Fstar(m,n,Q,w)=0$.

\emph{The case $w=o(Q)$.} Let
\[
a_1<\dots<a_r,\qquad b_1<\dots<b_c
\]
be the indices of the nonzero rows and columns of $Q$, where $r=r(Q)$ and $c=c(Q)$. If $A$ is
strongly $Q$-forcing of weight $o(Q)$, choose a witnessed $1$-entry and one copy of $Q$ that
witnesses it. The copy contributes $o(Q)$ distinct $1$-entries, so these are all the
$1$-entries of $A$. Thus $A$ is determined by the ambient images
$x_1<\dots<x_r$ of the nonzero pattern rows and $y_1<\dots<y_c$ of the nonzero pattern
columns. Conversely, any such row and column sets that can be extended to a full embedding of
the $s$ rows and $t$ columns of $Q$ determine a weight-$o(Q)$ matrix; completing the embedding
produces one copy that witnesses all its $1$-entries.

It remains to count the feasible nonzero-row images. The set
$x_1<\dots<x_r$ extends to an increasing image of all $s$ pattern rows if and only if
\[
0\le x_1-a_1\le x_2-a_2\le\dots\le x_r-a_r\le m-s=H-1.
\]
There must be at least $a_1-1$ rows before $x_1$, at least $a_{i+1}-a_i-1$ rows between
$x_i$ and $x_{i+1}$, and at least $s-a_r$ rows after $x_r$. These conditions are equivalent
to the displayed inequalities. Conversely, under these inequalities we can choose the required
number of unused rows in each of these gaps and thereby extend the prescribed images to all
$s$ pattern rows. Hence feasible row sets are in bijection with weakly increasing $r$-tuples from
$\{0,\dots,H-1\}$, of which there are $\binom{H+r-1}{r}$. For the columns, the feasible images
$y_1<\dots<y_c$ are characterized by
\[
0\le y_1-b_1\le\dots\le y_c-b_c\le n-t=W-1.
\]
They are therefore counted by $\binom{W+c-1}{c}$. Finally, the support of $A$ recovers its
nonzero ambient rows and columns,
so different feasible pairs cannot produce the same matrix. Multiplying the two counts proves
the last case.
\end{proof}

\begin{theorem}\label{thm:equality}
Let $Q$ be a nonzero $s\times t$ pattern, $m\ge s$, and $n\ge t$. Put
$H=m-s+1$ and $W=n-t+1$, and let $r(Q)$ and $c(Q)$ be the numbers of nonzero rows and columns
of $Q$. Equality holds in Theorem~\ref{thm:main},
\[
\Fstar(m,n,Q)=2^{HW},
\]
if and only if
\[
\bigl(H=1\ \text{or}\ r(Q)=1\bigr)
\qquad\text{and}\qquad
\bigl(W=1\ \text{or}\ c(Q)=1\bigr).
\]
Thus, when $H,W>1$, the extremizers are exactly the singleton patterns.
\end{theorem}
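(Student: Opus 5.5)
The plan is to prove sufficiency by checking the four combinations of the conditions directly. For necessity, I compare the pruned scaffolds of Section~\ref{sec:mainproof} with the exact minimum-weight count of Theorem~\ref{thm:minweight}.

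\emph{Sufficiency.} The hypothesis splits into four cases.
\begin{itemize}
\item If $r(Q)=c(Q)=1$, then $Q$ has exactly one $1$-entry, so $Q=E^{p,q}_{s,t}$ and Theorem~\ref{thm:corner} gives $2^{HW}$.
\item If $H=W=1$, then $m=s$ and $n=t$. Any copy of $Q$ uses all rows and columns, so a nonzero strongly forcing $A$ must equal $Q$. The count is therefore $2=2^{HW}$.
\item If $H=1$ and $c(Q)=1$, let $b$ be the unique nonzero column of $Q$ and $v=Q[\cdot,b]$. Every copy uses all $m=s$ rows, one column equal to $v$ (in role $b$), and $t-1$ other columns that must be zero columns of $A$. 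So a nonzero strongly forcing $A$ has all nonzero columns equal to $v$. It is strongly forcing exactly when at least $b-1$ zero columns precede the first nonzero column and at least $t-b$ zero columns follow the last one; these are precisely the columns a witnessing copy needs. Deleting $b-1$ leading and $t-b$ trailing zero columns is a bijection onto arbitrary $\{0,v\}$-column strings of length $n-t+1=W$. Counting the zero matrix as the all-zero string, this gives $2^W=2^{HW}$.
\item If $r(Q)=1$ and $W=1$, the previous case applies after Lemma~\ref{lem:transpose}.
\end{itemize}

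\emph{Necessity.} By Lemma~\ref{lem:transpose}, it suffices to show $\Fstar(m,n,Q)>2^{HW}$ when $H\ge2$ and $r(Q)\ge2$. Fix an anchor $(p,q)$ and consider the $2^{HW}$ pruned scaffolds $\widehat A(B)$, which are distinct and strongly forcing by Lemma~\ref{lem:pruned}.

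I first compute their weights as in Lemma~\ref{lem:weightAB}. For $B\ne0$, $|\widehat A(B)|_1$ equals
\[
|B|_1+|\mathcal R(B)|(\mu_p-1)+|\mathcal C(B)|(\nu_q-1)+\bigl(o(Q)-\mu_p-\nu_q+1\bigr).
\]
Since $|\mathcal R(B)|,|\mathcal C(B)|\ge1$, this is at least $|B|_1+o(Q)-1$, with equality when $|B|_1=1$. Hence a pruned scaffold has weight exactly $o(Q)$ if and only if $|B|_1=1$. Exactly $HW$ pruned scaffolds therefore have weight $o(Q)$.

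By Theorem~\ref{thm:minweight}, the number of strongly forcing matrices of weight $o(Q)$ is $\binom{H+r-1}{r}\binom{W+c-1}{c}$. With $r\ge2$, $H\ge2$, $c\ge1$, this is at least $\binom{H+1}{2}W>HW$. So some weight-$o(Q)$ strongly forcing matrix is not a pruned scaffold, and
\[
\Fstar(m,n,Q)\ \ge\ 2^{HW}-HW+\binom{H+r-1}{r}\binom{W+c-1}{c}\ >\ 2^{HW}.
\]

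The main obstacle is the weight bookkeeping for pruned scaffolds. One must check that no $B$ with $|B|_1\ge2$ gives weight $o(Q)$, which the displayed lower bound $|B|_1+o(Q)-1$ settles. The sufficiency case $H=1$, $c(Q)=1$ also needs care in verifying the leading/trailing zero-column bijection.
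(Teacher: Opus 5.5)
Your proposal is correct and follows essentially the same route as the paper. Sufficiency uses the same direct case analysis; your leading/trailing zero-column bijection is the paper's argument that nonzero columns equal $v$ and lie in a $W$-column interval. Necessity observes that exactly the $HW$ single-$1$ blocks give pruned scaffolds $\widehat A(B)$ of weight $o(Q)$, and compares this with the count $\binom{H+r(Q)-1}{r(Q)}\binom{W+c(Q)-1}{c(Q)}$ from Theorem~\ref{thm:minweight}. The only differences are cosmetic. You check the weight claim with the exact formula $|B|_1+|\mathcal R(B)|(\mu_p-1)+|\mathcal C(B)|(\nu_q-1)+(o(Q)-\mu_p-\nu_q+1)$ instead of the paper's ``a second free $1$ lies outside the canonical copy.'' You also phrase necessity as a strict inequality after a transpose reduction, rather than deducing surjectivity of $B\mapsto\widehat A(B)$ from equality.
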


\begin{proof}
The injection $B\mapsto\widehat A(B)$ in the proof of Theorem~\ref{thm:main} sends each of the
$HW$ blocks having exactly one $1$ to a matrix of weight $o(Q)$: its nonzero entries are exactly
one canonical copy of the $1$-entries of $Q$. No other block has that weight. Indeed,
$\widehat A(0)=0$, while if $B$ has at least two $1$'s, a canonical copy associated with one
of them contributes $o(Q)$ entries and a second free $1$ lies outside that copy. Hence, if
$\Fstar(m,n,Q)=2^{HW}$, the injection is surjective and Theorem~\ref{thm:minweight} gives
\[
\binom{H+r(Q)-1}{r(Q)}\binom{W+c(Q)-1}{c(Q)}=HW. \tag{*}
\]
The first binomial coefficient is at least $H$, since the weakly increasing
$r(Q)$-tuples from $\{0,\dots,H-1\}$ include the $H$ constant tuples. Equality holds exactly
when $H=1$ or $r(Q)=1$; if both exceed $1$, a nonconstant tuple exists. The analogous statement
holds for the second factor and $W$. Since both factors in (*) attain their respective lower
bounds, the two stated conditions are necessary.

For sufficiency, first suppose $H=W=1$. Then the ambient matrix has the same dimensions as
$Q$, so its only possible copy of $Q$ is the whole matrix. The only strongly $Q$-forcing
matrices are therefore $0$ and $Q$, giving $\Fstar=2=2^{HW}$.

Next suppose $H=1<W$ and $c(Q)=1$. Let column $q$ be the unique nonzero column of $Q$, and let
$v\in\{0,1\}^s$ be that column. Since $m=s$, every copy uses all rows. It follows that every
nonzero column of a strongly $Q$-forcing matrix must equal $v$ and must lie in the $W$-column
interval $\{q,\dots,q+W-1\}$ in which it can occupy role $q$. Conversely, any subset of these
$W$ positions may be filled with $v$, with all other columns zero: for each chosen position,
use columns $1,\dots,q-1$ and $q+W,\dots,n$ as the zero scaffold columns of a copy. Thus there
are exactly $2^W=2^{HW}$ matrices. The case $W=1<H$ and $r(Q)=1$ follows by transposition.
Finally, if $H,W>1$, the two conditions give $r(Q)=c(Q)=1$, so $Q$ is a singleton pattern and
Theorem~\ref{thm:corner} applies.
\end{proof}

\begin{theorem}\label{thm:entropy}
Let $Q$ be an $s\times t$ pattern with at least one $1$-entry, and fix $\alpha\in(0,1)$. Suppose
$m,n\to\infty$ with $m/n$ bounded away from $0$ and $\infty$, and let $w=w(m,n)$ be any sequence of
integers with $w/(mn)\to\alpha$. Then
\[
\frac{\log_2\Fstar(m,n,Q,w)}{mn}\;\longrightarrow\;h(\alpha).
\]
\end{theorem}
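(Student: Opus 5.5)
The plan is a squeeze. The upper bound is trivial. For the lower bound we inject weight-$(w-c_0)$ free blocks with no zero row or column into strongly forcing matrices of weight $w$, using the full scaffold.

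\emph{Upper bound.} Every strongly forcing matrix of weight $w$ is an $m\times n$ matrix of weight $w$, so $\Fstar(m,n,Q,w)\le\binom{mn}{w}\le 2^{mn\,h(w/(mn))}$ by Lemma~\ref{lem:entropy}. Since $w/(mn)\to\alpha$ and $h$ is continuous, the limsup of the normalized logarithm is at most $h(\alpha)$.

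\emph{Lower bound.} Fix any anchor $(p,q)$ and form the full scaffold $A(B)$. By Corollary~\ref{cor:good}, $A(B)$ is strongly $Q$-forcing whenever $B$ has no zero row and no zero column. By Lemma~\ref{lem:weightAB}, $|A(B)|_1=|B|_1+c_0$ with $c_0$ depending only on $Q,m,n$ and $0\le c_0=O(m+n)$. Since $B\mapsto A(B)$ is injective,
\[
\Fstar(m,n,Q,w)\ge N_k(H,W),\qquad k:=w-c_0 .
\]
Put $N=HW$. Because $m/n$ is bounded away from $0$ and $\infty$, we have $m+n=o(mn)$, so $c_0=o(mn)$. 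Also $N/(mn)\to1$, since $H=m-O(1)$ and $W=n-O(1)$. Hence $k/N\to\alpha\in(0,1)$, and in particular $1\le k\le N-1$ eventually. Lemma~\ref{lem:entropy} then gives $N^{-1}\log_2\binom{N}{k}\to h(\alpha)$.

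\emph{Main step.} It remains to show that the union-bound correction in Lemma~\ref{lem:Nk} is negligible, for instance $H\binom{(H-1)W}{k}+W\binom{H(W-1)}{k}\le\frac12\binom{N}{k}$ for large $m,n$. I would use the elementary ratio
\[
\frac{\binom{N-W}{k}}{\binom{N}{k}}=\prod_{i=0}^{k-1}\frac{N-W-i}{N-i}\le\Bigl(1-\frac WN\Bigr)^{k}\le e^{-kW/N}=e^{-k/H}.
\]
Since $k/H\sim\alpha W$, this gives $H\binom{(H-1)W}{k}/\binom Nk\le He^{-(\alpha/2+o(1))\cdot 2W}$, which tends to $0$ because $W\asymp n$ grows linearly while $H\asymp m$ is only a polynomial prefactor. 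This is where the bounded-ratio hypothesis is used. The column term is handled symmetrically with $e^{-k/W}$ and $k/W\sim\alpha H$. Thus $N_k(H,W)\ge\frac12\binom Nk$ eventually, and
\[
\frac{\log_2\Fstar(m,n,Q,w)}{mn}\ge\frac{N}{mn}\cdot\frac{\log_2\binom Nk-1}{N}\to h(\alpha).
\]
Combined with the upper bound, this gives the limit.

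I expect the only delicate point to be this negligibility estimate together with the bookkeeping $k/N\to\alpha$; the hypothesis on $m/n$ is exactly what makes both $c_0=o(mn)$ and $He^{-\Theta(W)}\to0$ hold.
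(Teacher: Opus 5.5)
Your proposal is correct and follows the paper's proof essentially step for step. Both use the trivial $\binom{mn}{w}$ upper bound, and for the lower bound the full scaffold with Corollary~\ref{cor:good}, Lemma~\ref{lem:weightAB}, Lemma~\ref{lem:Nk}, and the same ratio estimate $\binom{(H-1)W}{k}/\binom{HW}{k}\le e^{-k/H}$; the paper shows $\delta\to0$ where you show $\delta\le\frac12$, which makes no difference.

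One small correction to your closing remark: $c_0=O(m+n)=o(mn)$ needs only $m,n\to\infty$. The bounded-ratio hypothesis is used only to make $He^{-k/H}+We^{-k/W}\to0$.
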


\begin{proof}
\emph{Upper bound.} Since $\Fstar(m,n,Q,w)\le\binom{mn}{w}$, Lemma~\ref{lem:entropy} gives
\[
\limsup_{m,n\to\infty}\frac{\log_2\Fstar(m,n,Q,w)}{mn}\le h(\alpha).
\]

\emph{Lower bound.} Put $H=m-s+1$ and $W=n-t+1$. Fix a $1$-entry $(p,q)$ of $Q$, let $c_0$
be as in Lemma~\ref{lem:weightAB}, and set $k=w-c_0$. For all sufficiently large $m,n$, the
construction is defined and
\[
c_0=O(m+n)=o(mn),\qquad \frac{HW}{mn}\longrightarrow1,\qquad
\frac{k}{HW}\longrightarrow\alpha.
\]
In particular, $1\le k\le HW-1$. Corollary~\ref{cor:good} and
Lemma~\ref{lem:weightAB} give
\[
\Fstar(m,n,Q,w)\ge N_k(H,W).
\]
By Lemma~\ref{lem:Nk},
\[
N_k(H,W)\ge\binom{HW}{k}(1-\delta),
\quad
\delta:=H\frac{\binom{(H-1)W}{k}}{\binom{HW}{k}}
+W\frac{\binom{H(W-1)}{k}}{\binom{HW}{k}}.
\]
If $k\le(H-1)W$, then
\[
\frac{\binom{(H-1)W}{k}}{\binom{HW}{k}}
=\prod_{i=0}^{k-1}\frac{(H-1)W-i}{HW-i}
\le\left(1-\frac1H\right)^k\le e^{-k/H}.
\]
The first inequality holds because each factor in the product is at most
$((H-1)W)/(HW)=1-1/H$.
If $k>(H-1)W$, the ratio is zero, so the same bound holds. Exchanging $H$ and $W$ gives
\[
\frac{\binom{H(W-1)}{k}}{\binom{HW}{k}}\le e^{-k/W}.
\]
Therefore
\[
\delta\le H e^{-k/H}+W e^{-k/W}.
\]
Since $k/(mn)\to\alpha$, for all sufficiently large $m,n$ we have
$k\ge\alpha mn/2$. Thus
\[
\delta\le m e^{-\alpha n/2}+n e^{-\alpha m/2}\longrightarrow0,
\]
where the last limit uses $m\asymp n$. Therefore
\[
\frac{\log_2\Fstar(m,n,Q,w)}{mn}
\ge\frac{\log_2\binom{HW}{k}}{mn}+\frac{\log_2(1-\delta)}{mn}
\]
for all sufficiently large $m,n$, when $\delta<1$. Lemma~\ref{lem:entropy}, together with
$k/HW\to\alpha$ and $HW/mn\to1$, shows that the right side tends to $h(\alpha)$. This proves
the matching lower bound.
\end{proof}

\begin{corollary}\label{cor:fixedweight}
Suppose $Q$ has a $1$-entry that is the unique $1$ in its row and in its column. Put
$H=m-s+1$ and $W=n-t+1$. Then for all $m\ge s$, $n\ge t$, and every integer $w$
satisfying $o(Q)\le w\le o(Q)-1+HW$,
\[
\Fstar(m,n,Q,w)\;\ge\;\binom{HW}{\,w-o(Q)+1\,}.
\]
\end{corollary}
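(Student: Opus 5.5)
Take as anchor $(p,q)$ the $1$-entry of $Q$ that is the unique $1$ in its row and in its column, and form the full scaffold $A(B)$ of Section~\ref{sec:construction}. The plan is to show that every nonzero free block $B$ of the right weight yields a distinct strongly $Q$-forcing matrix of weight $w$. Then the number of such blocks, $\binom{HW}{k}$ with $k=w-o(Q)+1$, is a lower bound for $\Fstar(m,n,Q,w)$.

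First, by Lemma~\ref{lem:special}(c), the hypothesis on $(p,q)$ implies that $A(B)$ is strongly $Q$-forcing for every nonzero $B\in\{0,1\}^{\Phi^{\mathrm r}\times\Phi^{\mathrm c}}$. No conditions on zero rows or zero columns of $B$ are needed, because there are no scaffold-row or scaffold-column arm $1$-entries. Second, Lemma~\ref{lem:weightAB} gives $|A(B)|_1=|B|_1+c_0$, and in this unique-anchor situation $c_0=o(Q)-1$. So $|A(B)|_1=w$ exactly when $|B|_1=w-o(Q)+1=:k$. The range $o(Q)\le w\le o(Q)-1+HW$ translates to $1\le k\le HW$, so every block with $|B|_1=k$ is nonzero, and there are $\binom{HW}{k}$ such blocks. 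Third, the map $B\mapsto A(B)$ is injective, since $B$ is the restriction of $A(B)$ to the free block. Hence these give $\binom{HW}{k}$ distinct strongly $Q$-forcing matrices of weight $w$, which is the claimed bound.

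There is no serious obstacle here. The only points to check are that $k\ge1$, so that Lemma~\ref{lem:special}(c) applies (this is why the lower end of the range is $o(Q)$ and not $o(Q)-1$), and that $k\le HW$, so the binomial coefficient counts genuine blocks. Both follow directly from the stated range of $w$.
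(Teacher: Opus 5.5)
Your proposal is correct and follows the paper's proof exactly: you anchor the full scaffold at the isolated $1$-entry so that $c_0=o(Q)-1$ by Lemma~\ref{lem:weightAB}, and you invoke Lemma~\ref{lem:special}(c) for every nonzero block. You then count the $\binom{HW}{w-o(Q)+1}$ blocks of the required weight, which are all nonzero since $w\ge o(Q)$, and conclude by injectivity of $B\mapsto A(B)$.
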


\begin{proof}
Take the isolated $1$-entry as the anchor $(p,q)$, so $\mu_p=\nu_q=1$ and $c_0=o(Q)-1$ by
Lemma~\ref{lem:weightAB}. By Lemma~\ref{lem:special}(c), every nonzero block $B$ yields a strongly
$Q$-forcing matrix $A(B)$, which by Lemma~\ref{lem:weightAB} has weight $|B|_1+o(Q)-1$. The
integer $w-o(Q)+1$ is at least $1$. Choosing $B$ with this many ones makes $A(B)$ strongly
$Q$-forcing of weight $w$; there are
$\binom{HW}{w-o(Q)+1}$ such blocks, all nonzero, and $B\mapsto A(B)$ is injective. The displayed
bound follows.
\end{proof}

\begin{remark}\label{rem:notight}
For a \emph{fixed} weight $w>o(Q)$ the bound of Corollary~\ref{cor:fixedweight} is in general far
from sharp. For the identity pattern $Q=I_2$ (so $s=t=2$, $o(Q)=2$, and each $1$ is isolated),
Corollary~\ref{cor:fixedweight} gives
$\Fstar(m,n,I_2,3)\ge\binom{(m-1)(n-1)}2=\Theta(m^2n^2)$. On the other hand, for every
$r_1<r_2<r_3$ and $c_1<c_2<c_3$, the matrix whose only $1$'s are
$(r_1,c_1),(r_2,c_2),(r_3,c_3)$ is strongly $I_2$-forcing: the first two and last two entries
form copies witnessing all three. Hence
$\Fstar(m,n,I_2,3)\ge\binom m3\binom n3=\Omega(m^3n^3)$.
Theorem~\ref{thm:minweight} gives the exact count at minimum weight, and
Theorem~\ref{thm:entropy} gives the exponential growth rate at positive-density weight. The
construction does not determine the polynomial asymptotics of $\Fstar(m,n,Q,w)$ at a fixed
weight $w>o(Q)$.
\end{remark}

\section{Counting ordinary $Q$-forcing matrices}\label{sec:ordinary}

Ordinary $Q$-forcing is upward closed under the entrywise order. We prove that the canonical
minimum matrix $A_{m,n,Q}$ is its least element. Its zero entries can therefore be chosen
independently, giving $F(m,n,Q)=2^{mn-\mathfrak m(m,n,Q)}$.

\begin{definition}\label{def:ordforce}
Let $Q$ be an $s\times t$ pattern and let $A$ be an $m\times n$ $(0,1)$-matrix with $m\ge s$,
$n\ge t$. Say that $A$ is \emph{$Q$-forcing} if every $s\times t$ submatrix $B$ of $A$
satisfies $B\ge Q$ entrywise, that is, $B[a,b]=1$ at every position $(a,b)$ with $Q[a,b]=1$.
Let $F(m,n,Q)$ denote the number of $m\times n$ $Q$-forcing matrices.
\end{definition}

Equivalently, $A$ is $Q$-forcing if and only if every $s\times t$ submatrix can be turned into
$Q$ by changing some of its $1$-entries to $0$. This is the notion of Cao and
Tsai~\cite[Definition~1.1]{CT}, and as noted in the introduction it is incomparable with strong
forcing. We only use its upward closure below.

Define the $m\times n$ matrix $A_{m,n,Q}$ by declaring $A_{m,n,Q}[i,j]=1$ if and only if the cell
$(i,j)$ can occupy a $1$-position of $Q$ in some $s\times t$ submatrix; that is,
$A_{m,n,Q}[i,j]=1$ if and only if there are a $1$-entry $(a,b)$ of $Q$ (so $Q[a,b]=1$) and
increasing sequences $r_1<\dots<r_s$ in $[m]$ and $c_1<\dots<c_t$ in $[n]$ with $r_a=i$ and
$c_b=j$. The same eligible cells are obtained by considering only consecutive windows: if
$(i,j)$ can occupy role $(a,b)$ at all, then
$a\le i\le m-s+a$ and $b\le j\le n-t+b$, so the consecutive window beginning in row
$i-a+1$ and column $j-b+1$ places it in that role. Following \cite{CT}, let
$\mathfrak m(m,n,Q)$ denote the minimum of $|A|_1$ over all
$m\times n$ $Q$-forcing matrices. The following lemma is the part of
\cite[Lemma~2.1]{CT} needed here. We include the proof.

\begin{lemma}\label{lem:domination}
$A_{m,n,Q}$ is $Q$-forcing, and every $m\times n$ $Q$-forcing matrix $B$ satisfies
$B\ge A_{m,n,Q}$ entrywise. Hence $A_{m,n,Q}$ is the unique $Q$-forcing matrix of minimum weight,
and $\mathfrak m(m,n,Q)=|A_{m,n,Q}|_1$.
\end{lemma}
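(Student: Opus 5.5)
The argument splits into two parts: first that $A_{m,n,Q}$ is $Q$-forcing, and then that every $Q$-forcing matrix dominates it. The remaining assertions follow from these two facts.

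\emph{Step 1: $A_{m,n,Q}$ is $Q$-forcing.} Fix any $s\times t$ submatrix, given by rows $r_1<\dots<r_s$ and columns $c_1<\dots<c_t$, and any position $(a,b)$ with $Q[a,b]=1$. These very sequences show that the cell $(r_a,c_b)$ can occupy the $1$-position $(a,b)$ of $Q$. By the definition of $A_{m,n,Q}$ this gives $A_{m,n,Q}[r_a,c_b]=1$. Hence every submatrix dominates $Q$ entrywise. There is nothing further to check in this step.

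\emph{Step 2: domination.} Let $B$ be $Q$-forcing and suppose $A_{m,n,Q}[i,j]=1$. By definition there is a $1$-entry $(a,b)$ of $Q$ and increasing sequences with $r_a=i$ and $c_b=j$. Since $B$ is $Q$-forcing, the submatrix of $B$ on these rows and columns dominates $Q$. In particular $B[r_a,c_b]\ge Q[a,b]=1$, that is, $B[i,j]=1$. Thus $B\ge A_{m,n,Q}$. This step is equally immediate, because the definition of $A_{m,n,Q}$ was tailored to it; the only care needed is to use an arbitrary witnessing embedding rather than the consecutive window. Either choice works.

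\emph{Step 3: consequences.} By Step 2, every $Q$-forcing $B$ satisfies $|B|_1\ge|A_{m,n,Q}|_1$. By Step 1, $A_{m,n,Q}$ itself is $Q$-forcing, so $\mathfrak m(m,n,Q)=|A_{m,n,Q}|_1$. For uniqueness, suppose $B$ is $Q$-forcing with $|B|_1=|A_{m,n,Q}|_1$. Combined with $B\ge A_{m,n,Q}$ entrywise, equal weights force $B=A_{m,n,Q}$. No step presents a genuine obstacle. The main point is to keep the quantifiers straight: in Step 1 the embedding is given and the cell varies, while in Step 2 the cell is given and the embedding is produced from the definition.
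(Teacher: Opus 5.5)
Your proposal is correct and follows essentially the same argument as the paper: the paper proves domination and the $Q$-forcing property of $A_{m,n,Q}$ directly from its definition (in the opposite order), and then deduces minimality and uniqueness from $B\ge A_{m,n,Q}$ exactly as you do in Step 3.
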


\begin{proof}
\emph{Domination.} Suppose $A_{m,n,Q}[i,j]=1$, witnessed by a $1$-entry $(a,b)$ of $Q$ and
sequences $r_\bullet,c_\bullet$ with $r_a=i$, $c_b=j$. If $B$ is $Q$-forcing, its $s\times t$
submatrix on rows $r_\bullet$ and columns $c_\bullet$ dominates $Q$, so its entry in position
$(a,b)$, namely $B[i,j]$, equals $1$. Thus $B\ge A_{m,n,Q}$ entrywise.

\emph{$A_{m,n,Q}$ is $Q$-forcing.} Let $A'$ be the submatrix of $A_{m,n,Q}$ on any rows
$r_1<\dots<r_s$ and columns $c_1<\dots<c_t$, and let $(a,b)$ satisfy $Q[a,b]=1$. The cell
$(r_a,c_b)$ occupies the $1$-position $(a,b)$ of $Q$ in this submatrix, so by definition
$A_{m,n,Q}[r_a,c_b]=1$, i.e.\ $A'[a,b]=1=Q[a,b]$. Hence $A'\ge Q$; as $A'$ was an arbitrary
$s\times t$ submatrix, $A_{m,n,Q}$ is $Q$-forcing.

\emph{Minimality and uniqueness.} For any $Q$-forcing $B$, domination gives $B\ge A_{m,n,Q}$, so
$|B|_1\ge|A_{m,n,Q}|_1$ with equality iff $B=A_{m,n,Q}$. Thus $A_{m,n,Q}$ is the unique
minimum-weight $Q$-forcing matrix and $\mathfrak m(m,n,Q)=|A_{m,n,Q}|_1$.
\end{proof}

\begin{theorem}\label{thm:ordcount}
Let $Q$ be an $s\times t$ pattern with at least one $1$-entry, and let $m\ge s$, $n\ge t$. Then
\[
F(m,n,Q)=2^{\,mn-\mathfrak m(m,n,Q)} .
\]
\end{theorem}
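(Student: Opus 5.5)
The plan is to identify the set of $Q$-forcing matrices with the principal up-set above $A_{m,n,Q}$ in the entrywise (Boolean lattice) order, and then count that up-set directly. By Lemma~\ref{lem:domination}, every $Q$-forcing matrix $B$ satisfies $B\ge A_{m,n,Q}$. So it remains to prove the converse inclusion: every matrix $B\ge A_{m,n,Q}$ is $Q$-forcing.

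For the converse, I will use upward closure of ordinary forcing. Suppose $B\ge A$ entrywise and $A$ is $Q$-forcing. Fix any rows $r_1<\dots<r_s$, any columns $c_1<\dots<c_t$, and any $(a,b)$ with $Q[a,b]=1$. Then
\[
B[r_a,c_b]\ge A[r_a,c_b]=1,
\]
so the submatrix of $B$ on these rows and columns dominates $Q$. Applying this with $A=A_{m,n,Q}$, which is $Q$-forcing by Lemma~\ref{lem:domination}, shows that every $B\ge A_{m,n,Q}$ is $Q$-forcing. Hence the $Q$-forcing matrices are exactly the matrices $B$ with $B\ge A_{m,n,Q}$.

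Counting these is immediate. Such a $B$ must equal $1$ on the support of $A_{m,n,Q}$, which has size $|A_{m,n,Q}|_1=\mathfrak m(m,n,Q)$ by Lemma~\ref{lem:domination}. On the remaining $mn-\mathfrak m(m,n,Q)$ cells, $B$ is arbitrary. This gives the bijection with $\{0,1\}^{mn-\mathfrak m(m,n,Q)}$ and therefore $F(m,n,Q)=2^{mn-\mathfrak m(m,n,Q)}$.

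There is no real obstacle here. The one point to state carefully is that $\mathfrak m(m,n,Q)$ is exactly the support size of $A_{m,n,Q}$, which Lemma~\ref{lem:domination} already provides. The hypothesis that $Q$ is nonzero is not actually needed: if $Q=0$, then $A_{m,n,Q}=0$ and the formula still holds.
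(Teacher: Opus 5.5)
Your proposal is correct and follows the paper's proof: it uses Lemma~\ref{lem:domination} for one inclusion and upward closure (via $A_{m,n,Q}$ being $Q$-forcing) for the converse, then counts the free cells off the support of $A_{m,n,Q}$. Your remark that the nonzero hypothesis is unnecessary also matches the paper's comment after Theorem~\ref{thm:duality}.
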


\begin{proof}
Lemma~\ref{lem:domination} shows that every $Q$-forcing matrix $B$ satisfies
$B\ge A_{m,n,Q}$. Conversely, suppose $B\ge A_{m,n,Q}$. Every $s\times t$ submatrix $A'$
of $A_{m,n,Q}$ dominates $Q$, and the corresponding submatrix $B'$ of $B$ satisfies
$B'\ge A'\ge Q$. Hence $B$ is $Q$-forcing. The $Q$-forcing matrices are therefore exactly
the matrices above $A_{m,n,Q}$. The $\mathfrak m(m,n,Q)$ $1$-entries of $A_{m,n,Q}$ are
fixed, while each of its other $mn-\mathfrak m(m,n,Q)$ entries can be chosen independently.
The formula follows.
\end{proof}

\begin{theorem}\label{thm:duality}
Let $Q$ be a nonzero $s\times t$ pattern, $m\ge s$, and $n\ge t$. Put
\[
\mathfrak m=\mathfrak m(m,n,Q),\qquad o=o(Q),\qquad
H=m-s+1,\qquad W=n-t+1,
\]
and let $r(Q)$ and $c(Q)$ be the numbers of nonzero rows and nonzero
columns of $Q$. Then
\[
\Fstar(m,n,Q,w)\le\binom{\mathfrak m}{w}
\]
for every integer $w\ge0$, and consequently
\[
\Fstar(m,n,Q)
\le 1+\sum_{w=o}^{\mathfrak m}\binom{\mathfrak m}{w}
=2^{\mathfrak m}-\sum_{w=1}^{o-1}\binom{\mathfrak m}{w}.
\]
Consequently,
\[
2^{\,mn-\mathfrak m+HW}
\le
F(m,n,Q)\,\Fstar(m,n,Q)
\le
2^{mn}.
\]
Equality in the left-hand inequality holds if and only if
\[
\bigl(H=1\text{ or }r(Q)=1\bigr)
\qquad\text{and}\qquad
\bigl(W=1\text{ or }c(Q)=1\bigr),
\]
whereas equality in the right-hand inequality holds if and only if
$Q$ has exactly one $1$-entry. In particular,
\[
2\le F(m,n,Q)\,\Fstar(m,n,Q),
\]
with equality if and only if $m=s$, $n=t$, and $Q=J_{s,t}$
(equivalently, $Q=J_{m,n}$).
\end{theorem}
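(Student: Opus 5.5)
The plan has four steps: a support argument for the first inequality, the product bounds from Theorems~\ref{thm:ordcount} and~\ref{thm:main}, the two equality characterizations, and finally the minimum value $2$.

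\textbf{Step 1: the weight bound.} Every $1$-entry of a strongly $Q$-forcing matrix $A$ is witnessed, so it occupies a $1$-position of $Q$ in some copy, hence in some $s\times t$ submatrix. By the definition of $A_{m,n,Q}$ this gives $A[i,j]=1\Rightarrow A_{m,n,Q}[i,j]=1$. So the support of $A$ is a $w$-subset of the $\mathfrak m$ cells of $A_{m,n,Q}$ (Lemma~\ref{lem:domination}), and $\Fstar(m,n,Q,w)\le\binom{\mathfrak m}{w}$. Summing over $w$, and using $\Fstar(m,n,Q,0)=1$ and the vanishing for $1\le w\le o-1$ from Theorem~\ref{thm:minweight}, gives the stated bound on $\Fstar$. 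Rewriting it with the binomial theorem gives the second form.

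\textbf{Step 2: the product bounds.} Multiply by $F=2^{mn-\mathfrak m}$ (Theorem~\ref{thm:ordcount}). The upper product bound follows because the sum is at most $2^{\mathfrak m}$. The lower product bound follows from Theorem~\ref{thm:main}.

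\textbf{Step 3: the equality cases.} The left equality holds exactly when $\Fstar=2^{HW}$, which Theorem~\ref{thm:equality} characterizes.

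For the right equality we need $\Fstar=2^{\mathfrak m}$. If $o\ge2$, then $\sum_{w=1}^{o-1}\binom{\mathfrak m}{w}\ge\mathfrak m\ge o\ge1$, since $A_{m,n,Q}$ contains a copy of the ones of $Q$. So the upper bound is strict, and equality forces $o=1$. Conversely, if $Q=E^{p,q}_{s,t}$, the eligible cells of $A_{m,n,Q}$ are exactly $I_p\times J_q$ (using the consecutive-window remark), so $\mathfrak m=HW$. Then Theorem~\ref{thm:corner} gives $\Fstar=2^{HW}=2^{\mathfrak m}$.

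\textbf{Step 4: the minimum value $2$.} The lower product bound gives $F\Fstar\ge 2^{mn-\mathfrak m+HW}\ge 2^{HW}\ge2$, since $mn\ge\mathfrak m$ and $HW\ge1$. Equality requires $H=W=1$ and $\mathfrak m=mn$, together with the left-equality condition, which holds automatically when $H=W=1$. When $H=W=1$, the only submatrix is the whole matrix, so $A_{m,n,Q}=Q$ and $\mathfrak m=o(Q)$. Thus $\mathfrak m=mn$ means $Q=J_{m,n}$. Conversely, for $Q=J_{m,n}$ with $m=s$, $n=t$, we get $F=1$ and $\Fstar=2$.

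The main obstacle is the sufficiency direction of the right equality, namely identifying $\mathfrak m$ for singleton patterns. Lemma~\ref{lem:cornerchar}'s necessity argument handles it directly.
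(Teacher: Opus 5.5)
Your proposal is correct and follows the paper's proof essentially step for step. The matching steps are: the support inclusion $A\le A_{m,n,Q}$ for the weightwise binomial bound; the product bounds from Theorems~\ref{thm:ordcount} and~\ref{thm:main}; Theorem~\ref{thm:equality} for the left equality; strictness via the positive $w=1$ term when $o\ge2$, together with $\mathfrak m=HW$ for singletons, for the right equality; and forcing $HW=1$ and $\mathfrak m=mn$ (so $A_{m,n,Q}=Q$) for the minimum value $2$.
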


\begin{proof}
Let $A$ be strongly $Q$-forcing and let $(i,j)$ be a $1$-entry of $A$. A witnessing copy
places $(i,j)$ at some $1$-position of $Q$. By the definition of the canonical matrix
$A_{m,n,Q}$, this means $A_{m,n,Q}[i,j]=1$. Hence every strongly $Q$-forcing matrix satisfies
$A\le A_{m,n,Q}$, so a matrix of weight $w$ has its support among the
$\mathfrak m=|A_{m,n,Q}|_1$ eligible cells. This proves the weightwise binomial bound.
Theorem~\ref{thm:minweight} shows that the zero matrix is the only strong matrix below weight
$o$, giving the displayed total bound.

Dropping the subtracted terms yields $\Fstar(m,n,Q)\le2^{\mathfrak m}$; multiplying by
Theorem~\ref{thm:ordcount} gives the upper product bound. If $o\ge2$, the term
$\binom{\mathfrak m}{1}$ is positive and the inequality is strict. If $o=1$, then $Q$ is a
singleton pattern. Put $H=m-s+1$ and $W=n-t+1$. Its eligible cells form the $H\times W$
rectangle in Lemma~\ref{lem:cornerchar}, so $\mathfrak m=HW$ and Theorem~\ref{thm:corner} gives
$\Fstar=2^{\mathfrak m}$; equality follows.

For the lower product bound, Theorems~\ref{thm:ordcount} and
\ref{thm:main} give
\[
F(m,n,Q)\,\Fstar(m,n,Q)
=
2^{\,mn-\mathfrak m}\Fstar(m,n,Q)
\ge
2^{\,mn-\mathfrak m+HW}.
\]
By Theorem~\ref{thm:equality}, equality holds here if and only if
\[
\bigl(H=1\text{ or }r(Q)=1\bigr)
\qquad\text{and}\qquad
\bigl(W=1\text{ or }c(Q)=1\bigr).
\]

Since $\mathfrak m\le mn$ and $HW\ge1$, the lower product bound implies
\[
F(m,n,Q)\,\Fstar(m,n,Q)\ge2.
\]
Suppose equality holds. Then
\[
mn-\mathfrak m+HW=1.
\]
Both $mn-\mathfrak m$ and $HW$ are nonnegative integers, and $HW\ge1$,
so necessarily $\mathfrak m=mn$ and $HW=1$. Since $H,W\ge1$, this gives
$H=W=1$, and hence $m=s$ and $n=t$. In this case the canonical minimum
matrix $A_{m,n,Q}$ is $Q$ itself, so $\mathfrak m=o(Q)$. Therefore
$o(Q)=mn$, which means that $Q=J_{m,n}$.

Conversely, if $m=s$, $n=t$, and $Q=J_{s,t}$, then $J_{m,n}$ is the
unique $Q$-forcing matrix, while the zero matrix and $J_{m,n}$ are the
only strongly $Q$-forcing matrices. Thus
\[
F(m,n,Q)\,\Fstar(m,n,Q)=1\cdot2=2.
\]
\end{proof}

The hypothesis that $Q$ has a $1$-entry is the standing convention of this paper and is not
needed for the formula: for the all-zero pattern every matrix is (vacuously) $Q$-forcing,
$\mathfrak m(m,n,Q)=0$, and Theorem~\ref{thm:ordcount} reads $F(m,n,Q)=2^{mn}$ correctly.

Ordinary forcing is upward closed, whereas strong forcing need not be. For example, $(0,0)$ is
strongly $(1,0)$-forcing, but $(0,1)$ is not. The square growth rates also differ.
Theorem~\ref{thm:main} gives $\log_2\Fstar(n,n,Q)=n^2(1-o(1))$, while
Theorem~\ref{thm:ordcount} and \cite{CT} give
\[
\log_2F(n,n,Q)=n^2-\mathfrak m(n,n,Q)=O(n).
\]
Thus the strong and ordinary square growth rates are $1$ and $0$, respectively.

\subsection{Identity patterns}

For $n\ge2k$, Cao and Tsai \cite{CT} proved that
$\mathfrak m(n,n,I_k)=n^2-k(k-1)$. We extend this formula to all $n\ge k$.

\begin{corollary}\label{cor:Ik}
Let $k\ge 1$ and $n\ge k$, and let $I_k$ be the $k\times k$ identity pattern. Using $0$-indexed
coordinates $\{0,\dots,n-1\}^2$, the minimum forcing matrix $A_{n,n,I_k}$ has
\[
A_{n,n,I_k}[i,j]=0\iff |i-j|\ge n-k+1,
\]
so its $0$-cells form the two triangular corners $\{j-i\ge n-k+1\}$ and $\{i-j\ge n-k+1\}$,
each of size $1+2+\dots+(k-1)=\binom k2$. Consequently $\mathfrak m(n,n,I_k)=n^2-k(k-1)$ and
\[
F(n,n,I_k)=2^{\,k(k-1)}\qquad\text{for every }n\ge k.
\]
\end{corollary}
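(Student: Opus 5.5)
By Lemma~\ref{lem:domination} and Theorem~\ref{thm:ordcount}, it suffices to identify the eligible cells of $A_{n,n,I_k}$. Then $\mathfrak m(n,n,I_k)$ is their number, and $F(n,n,I_k)=2^{n^2-\mathfrak m}$. We use the consecutive-window description stated before Lemma~\ref{lem:domination}. In $1$-indexed terms, a cell $(i,j)$ can occupy role $(a,b)$ exactly when $a\le i\le n-k+a$ and $b\le j\le n-k+b$. For $I_k$ the $1$-positions are $(a,a)$, $a\in[k]$. So, switching to $0$-indexed coordinates, $(i,j)$ is eligible if and only if there is an $a\in\{0,\dots,k-1\}$ with
\[
a\le i\le n-k+a\quad\text{and}\quad a\le j\le n-k+a.
\]

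The first step is to rewrite this as a single condition. The constraints on $a$ say that
\[
\max(i,j)-(n-k)\le a\le\min(i,j).
\]
They also require $0\le a\le k-1$. Such an integer exists if and only if all four cross inequalities hold:
\[
\max(i,j)-(n-k)\le\min(i,j),\qquad \max(i,j)-(n-k)\le k-1,\qquad 0\le\min(i,j),\qquad 0\le k-1.
\]
The last three are automatic for cells in $\{0,\dots,n-1\}^2$ with $k\ge1$. The first is exactly $|i-j|\le n-k$. Hence $A_{n,n,I_k}[i,j]=0$ if and only if $|i-j|\ge n-k+1$.

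The second step is to count these zero cells. In the corner $j-i=d$ with $n-k+1\le d\le n-1$, the diagonal has $n-d$ cells. These sizes run through $k-1,\dots,1$, so each corner has $\binom k2$ cells. When $k=1$ both corners are empty, which is consistent. Thus $\mathfrak m(n,n,I_k)=n^2-k(k-1)$, and Theorem~\ref{thm:ordcount} gives $F(n,n,I_k)=2^{k(k-1)}$.

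The only real care point is the interval-intersection step: the cap $a\le k-1$ must not cut off any cell. This is handled by noting that $\max(i,j)\le n-1$, which ensures the second cross inequality always holds. The rest is routine.
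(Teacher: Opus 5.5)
Your proof is correct and follows essentially the same route as the paper. Both reduce eligibility to the existence of an integer $a\in\{0,\dots,k-1\}$ with $\max(i,j)-(n-k)\le a\le\min(i,j)$, check that the bounds $0$ and $k-1$ never bind so the condition becomes $|i-j|\le n-k$, and then count the corner diagonals. The only point you leave implicit is that the two corners are disjoint (because $n-k+1\ge1$); the paper states this explicitly, and it is what gives exactly $k(k-1)$ zero cells.
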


\begin{proof}
By the definition of $A_{n,n,I_k}$, its entry $(i,j)$ equals $1$ if and only if there is an
integer $a\in\{0,\dots,k-1\}$ such that
\[
\max(i,j)-(n-k)\ \le\ a\ \le\ \min(i,j).
\]
The lower endpoint is at most $k-1$, and the upper endpoint is at least $0$. Thus such an
integer $a$ exists if and only if
\[
\max(i,j)-(n-k)\le\min(i,j),
\]
which is equivalent to $|i-j|\le n-k$. This proves the stated description of the zero cells.
For each $d\in\{n-k+1,\dots,n-1\}$, there are $n-d$ cells with $j-i=d$. Hence each triangular
corner has
\[
\sum_{d=n-k+1}^{n-1}(n-d)=1+2+\dots+(k-1)=\binom{k}{2}
\]
cells. The corners are disjoint because $n-k+1\ge1$, so
\[
n^2-\mathfrak m(n,n,I_k)=|Z|=2\binom k2=k(k-1).
\]
Theorem~\ref{thm:ordcount} now gives $F(n,n,I_k)=2^{k(k-1)}$ for every $n\ge k$.
\end{proof}

\section{Concluding questions}

Theorem~\ref{thm:main} and Theorem~\ref{thm:equality} determine the smallest possible strong
count, and all its equality cases, among nonzero patterns of fixed dimensions. Several
enumerative problems remain. Which nonzero $s\times t$ patterns maximize $\Fstar(m,n,Q)$, and what
further pattern families admit exact formulas? At fixed weight $w>o(Q)$, the polynomial order of
$\Fstar(m,n,Q,w)$ is open in general; Remark~\ref{rem:notight} shows that the full-scaffold
bound need not have the correct order. How far can the comparability hypothesis in
Theorem~\ref{thm:entropy} be weakened? It remains to characterize equality in the
weightwise canonical-support bound of Theorem~\ref{thm:duality}.

\section*{Declaration of generative AI and AI-assisted technologies}

During the preparation of this draft, JG used Codex with GPT-5.6 Sol Ultra to assist with manuscript editing. The authors reviewed and edited the content as needed and take full responsibility for the content of the article.

\end{document}